\renewcommand{\section}{\@startsection%
{section}% 
{1}%
{0em}%
{1.7em}%
{1.2em}%
{\normalfont\large\centering\bfseries}}
\renewcommand{\@seccntformat}[1]%
{\csname the#1\endcsname.\hspace{0.5em}}
\numberwithin{equation}{section}
\newtheorem{theorem}{Theorem}[section]
\newtheorem{proposition}{Proposition}[section]
\newtheorem{lemma}{Lemma}[section]
\newtheorem{corollary}{Corollary}[section]
\theoremstyle{definition}
\newtheorem{definition}{Definition}
\newtheorem{remark}{Remark}
\newcommand{\abs}[1]{\left|#1\right|}
\newcommand{\cc}[1]{\overline{#1}}
\newcommand{\ie}{\emph{i.\,e.\,}}
\newcommand{\cf}{\emph{cf.}}
\newcommand\CZ[2]{\pmb{\mathsf{ Z}}_{#1}({#2})}
\newcommand{\bea}{\begin{eqnarray}}
\newcommand{\eea}{\end{eqnarray}}
\newcommand{\beao}{\begin{eqnarray*}}
\newcommand{\eeao}{\end{eqnarray*}}
\newcommand{\llb}{\left\lbrace}
\newcommand{\rrb}{\right\rbrace}
\newcommand{\K}{\mathcal{K}}
\newcommand\R{{\mathbb R}}
\newcommand\N{{\mathbb N}}
\newcommand\C{{\mathbb C}}
\newcommand\D{{\mathbb D}}
\renewcommand{\H}{\mathcal{H}}
\newcommand{\T}{{T}}
\newcommand{\HH}{{\mathcal{H}\oplus\mathcal{H}}}
\newcommand\ip[2]{\langle {#1},{#2} \rangle}
\newcommand\no[1]{\| {#1} \|}
\def\fin{{\rm fin}}
\newcommand\Nk[2]{\pmb{\mathsf{N}}_{#1}({#2})}
\newcommand\oP[2]{{#1}\oplus {#2} }
\newcommand\oM[2]{{#1}\ominus {#2} }
\newcommand\cA[1]{\mathcal {#1}}
\newcommand\rE[1]{_{|_{#1}}}
\newcommand\vE[2]{{\begin{pmatrix}{#1}\\{#2}\end{pmatrix}}}
\DeclareMathOperator{\im}{Im\,}
\DeclareMathOperator{\re}{Re}
\DeclareMathOperator{\dom}{dom}
\DeclareMathOperator{\ran }{ran }
\DeclareMathOperator{\mul}{mul}
\DeclareMathOperator{\Span}{span}
\begin{document}
\begin{titlepage}
\title{Canonical decomposition of dissipative linear relations
\footnotetext{%
Mathematics Subject Classification(2010):
47A06,  % Linear relations (multivalued linear operators)
47B44, % Accretive operators, dissipative operators, etc.
% 47B25, %Symmetric and selfadjoint operators (unbounded) 
47A45, %Canonical models for contractions and nonselfadjoint operators  
47A15.  % Invariant subspaces
}
\footnotetext{%
Keywords:
Linear relations;
Dissipative relations;
Invariant and reducing subspaces;
Canonical decompositions.
}
}
\author{
\textbf{Josu\'e I. Rios-Cangas}
\\
%% ----- Institution --------
\small Centro de Investigación en Matemáticas, A. C. \\[-1.6mm] 
\small Jalisco S/N, Col. Valenciana\\[-1.6mm] 
\small CP: 36023 Guanajuato, Gto, México.\\[-1.6mm]
%\small Departamento de F\'{i}sica Matem\'{a}tica\\[-1.6mm]
%\small Instituto de Investigaciones en Matem\'aticas Aplicadas y en Sistemas\\[-1.6mm]
%\small Universidad Nacional Aut\'onoma de M\'exico\\[-1.6mm]
%\small C.P. 04510, M\'exico D.F.\\[-1.6mm]
\small \texttt{jottsmok@gmail.com}
}
%%%%%%%%
\date{}
\maketitle
\vspace{-4mm}
\begin{center}
\begin{minipage}{5in}
  \centerline{{\bf Abstract}} \bigskip
  On the basis of  Sz. Nagy-Foia\c{s}-Langer and von
  Neumann-Wold decompositions, two decompositions for dissipative linear relations are given and they are realized %and some refinements of this splitting up.
by transforming invariant  subspaces for contractions, by means of the Z transform. These decompositions permit the
  separation of the selfadjoint and completely nonselfadjoint parts of
  a dissipative relation.
  \end{minipage}
\end{center}
\thispagestyle{empty}
\end{titlepage}
%%%%%%%%%%%%%%%%%%%%%%%%%%%%%%
\section{Introduction}
\label{sec:intro}

This paper deals with dissipative linear relations in  Hilbert spaces, in which we are interested in decompose any closed dissipative relation 
into the selfadjoint and completely nonselfadjoint parts. In particular, decompose symmetric relations which do not admit proper dissipative  extensions. 

 The theory of linear relations is nowadays a widely abstract tool and of practical importance in extension theory, spectral analysis of dissipative and selfadjoint operators, canonical systems, and so forth. The concept of linear relation generalizes the notion of linear operator, when this is
identified by its graph (some works refer to a linear relation as
a multivalued linear operator \cf \cite{MR1631548}). Actually, a linear
relation is an operator whenever its multivalued part is the trivial space. we refer the reader to \cite{MR1631548,MR3178945,MR0123188,MR725424,MR2327982} for be familiar with the theory of linear relations and \cite{MR3057107,MR0361889,MR0477855,riossilva-expos,arXiv:1806.07503} for the theory of dissipative relations. 

The importance to studying dissipative linear operators arises in applications to problems in mathematical physics, since they are connected with dissipative systems \ie systems in which the energy is in general
nonconstant and nonincreasing in time (see for example \cite{MR2186014,MR3192426,MR3405898,MR3607473}). Phillips introduced the term of a dissipative operator in his seminal work \cite{MR0104919}, motivated to obtain a solution of the Cauchy problem for dissipative hyperbolic systems of partial differential equations.  Besides, he showed that a maximal dissipative operator generates a strongly continuous semi-group of contraction operators (see also \cite{MR3350727}). We draw the reader's attention to \cite{acta15367,MR2926200,MR3551233,MR3864399,MR3738818}, for more  applications of dissipative operators.

The theory of dissipative operators has its roots in the theory of contractions, \ie linear operators $T$ such that $\no T\leq 1$ (see \cite{MR0058128,MR2760647}, for an
exhaustive exposition of contractions). The class of contractions has been amply studied and is a well-understood class of operators (some generalizations of the concept of contraction can be found in \cite{MR1873624,MR0250106}). We would point out that a motivation for studying contractions stems from the invariant subspace problem \cite{MR675952,MR2003221,MR2760647}. Besides, contractions and dissipative operators are related with notable properties by virtue of the Cayley transform \cite[Chap.\, 4,\, Sec. 4]{MR2760647}. In order to take advantage of these eminent  properties, we use a small variation of the Cayley transform, named the Z transform (\cf \cite{MR2093073}).

The present work is concerned with a particular feature of
contractions, namely to the fact that they admit useful
decompositions. We focus our attention on two kinds of decompositions,
the Sz. Nagy-Foia\c{s}-Langer and the von Neumann-Wold
decompositions \cite{MR2760647,MR1464436} (see \cite{MR2681000} for a more general setting).  Our goal is to give a decomposition of  any closed dissipative relation, in which we isolate its selfadjoint part (see Theorem~\ref{Desconucnu01}). Particularly, in Theorem~\ref{descsisaem} we show that any symmetric relation, which does not admit dissipative proper extensions, is separated into its selfadjoint part and its elementary maximal part, \ie a relation whose Z transform is a unilateral shift. These decompositions are made by means of transforming invariant subspaces
for contractions. It is worth mentioning that the decompositions we show in this work, were intended as an application of boundary and quasi boundary triples, since on these theories appear by natural way the dissipative linear relations \cite{MR2953553,MR2150518,MR3050306,MR3705312}. On the other hand, these decompositions arise naturally in functional models of dissipative operators  
 \cite{MR0433235,a2018BBMMSN,MR1037765}.

The paper is organized as follows. In Section~\ref{sec:Invariant-reducing} we briefly recall some of the standard definitions on linear relations.  
 It is one of the main objectives in this section  to deal whit the problem of invariant and reducing subspaces for linear relations. Here, 
we show that the adjoint is distributed on reducing subspaces (see Theorem~\ref{teo:invariante-adjunto}).  Also, we show that linear relations of the form $\oP{\K}{\K}$, where $\K$ is a linear set, are invariant under the $Z$ transform (see
Remark~\ref{insub00}). A consequence of this is that the $Z$ transform
preserve reducing subspaces for any linear relation (See Theorem~\ref{redsubincal}). We deal in 
Section~\ref{sec:decomposition-of-relations}
with the general theory of contractions, in particular, the
Sz. Nagy-Foia\c{s}-Langer and the von Neumann-Wold
decompositions, in which the Sz. Nagy-Foia\c{s}-Langer
decomposition is extended to any closed contraction (see Theorem~\ref{cndfdes}). These results, together with the theory of reducing subspaces developed in the preceding section, are combined with the
theory of the $Z$ transform to obtain the required decomposition of any closed
dissipative relation.  Finally, as an illustrative example of the abstract techniques in this work, we present in Section \ref{sec:Example} a maximal symmetric relation, with nontrivial multivalued part as well as its corresponding decomposition.  

\subsection*{Acknowledgments} The author gratefully Acknowledge by the program for a global and integrated advancement of Mexican mathematics, project FORDECYT 265667, for which part of this research was realized. %Also, the author wish to thank... 

\section{Invariant and reducing subspaces for linear relations}
\label{sec:Invariant-reducing}
Let $(\H,\, \ip{\cdot}{\cdot})$ be a separable
Hilbert space, with inner product antilinear in its left
argument. We denote  $\HH$ as the orthogonal sum of two copies of the
Hilbert space $\H$ \cf \cite[Sec.\, 2.3]{MR1192782}. Throughout this
work, any linear set $T$ in $\HH$, is called a \emph{linear relation} (or merely relation), with 
\begin{align*}
 \dom \T:=\llb f\in \H\,:\ \vE fg\in T\rrb&\quad
  \ran\T:=\llb g\in \H\,:\ \vE fg\in T\rrb\\[1mm]
  \ker\T:=\llb f\in \H\,:\ \vE f0\in T\rrb&\quad
  \mul\T:=\llb g\in \H\,:\ \vE 0g\in T\rrb.
\end{align*} 

The concept of linear relation generalizes the notion of linear operator. Actually, a  relation $T$ is an operator (when it is identified by its graph) if and only its multivalued part $\mul T=\{0\}$.  

For two relations $T,S$ and $\zeta\in \C$, we denote the following linear relations: 
\beao
T+S&:=\llb\vE f{g+h}\ :\ \vE fg\in T,\ \ \vE fh\in S\rrb\quad
\zeta T:=\llb \vE f{\zeta g}\ :\ \vE fg\in T\rrb\\
ST&:=\llb \vE fk\ :\ \vE fg\in T,\ \ \vE gk\in S\rrb\qquad\T^{-1}:=\llb\vE gf\,:\, \vE fg\in T \rrb\,.
\eeao

The \emph{adjoint} of a relation $T$ is defined by
\begin{align*}
 \T^*:=\llb\vE hk\in \HH\ :\ \ip kf=\ip hg,\ \ \forall \vE fg\in \T\rrb,
\end{align*}
which turns out to be a closed relation with the following properties:
\begin{align}\label{poHs}
\T^*&=(-T^{-1})^{\perp},&S\subset  T&\Rightarrow T^*\subset S^*,\nonumber\\
\T^{**}&=\overline T,& (\alpha\T)^*&=\overline{\alpha}\T^*,\,\mbox{ with } \alpha\neq0,\\
(T^*)^{-1}&=(T^{-1})^*,&\ker \T^*&=(\ran \T)^{\perp}.\nonumber
\end{align}

Here and subsequently, a relation $T$ is \emph{bounded} whenever there exists a positive constant $C$ such that $\no g\leq C\,\no f$, for all $\vE fg\in T$. It is of our interest to point out that the boundedness definition for relations is not unique (see \cite{MR1631548}).  The boundedness condition we use in this paper implies that any bounded relation is an operator. 

We denote the \emph{quasi-regular} set of a linear relation $T$ by
\beao
\hat\rho(T):=\{\zeta\in\C\,:\,(T-\zeta I)^{-1}\,\text{ is bounded }\}\,,
\eeao
As in the case of operators, the quasi-regular set of a closed relation $T$ is open.

Let us consider the deficiency space of $T$, given by 
\beao
\Nk \zeta T:=\llb\vE f{\zeta f}\in T\rrb\,,\quad \zeta \in\C\,,
\eeao 
which is a bounded relation with $\dom \Nk \zeta T= \ker (T-\zeta I)$. Moreover, 
\bea\label{eq:constant-qregular}
\dim \Nk {\cc\zeta}{ T^*}\,,\quad \zeta\in\hat\rho(T)\,,
\eea
remains constant, on each connected component of $\hat\rho (T)$  (\cf \cite[Thm.\,3.7.4]{MR1192782}). 

The \emph{regular} set of a relation $T$, is defined by 
\beao
\rho(T):=\{\zeta\in\C\,:\, (T-\zeta I)^{-1}\in \cA B(\H)\}\,,
\eeao
 where  $\cA B(\H)$ denotes the class of bounded operators defined in the whole space $\H$. Also, the regular set is open and consists of all connected components of $\hat\rho(T)$, in which \eqref{eq:constant-qregular} is equal zero.
 
For a relation $T$, we consider the following sets:
\begin{align*}
\sigma(T)&:=\C\backslash \rho(T) &\mbox{(spectrum)}\\
\hat\sigma(T)&:=\C\backslash \hat\rho(T)&\mbox{(spectral core)}\\
\sigma_p(T)&:=\{\zeta \in \C\ :\ \dim \Nk\zeta T\neq 0\}&\mbox{(point spectrum)}\\
\sigma_c(T)&:=\{\zeta \in \C\ :\ \ran (T-\zeta I)\neq \overline{\ran (T-\zeta I)}\}&\mbox{(continuous spectrum)}\\
\sigma_{r}(T)&=\sigma(T)\backslash\hat\sigma(T)&\mbox{(residual spectrum)}
\end{align*}
 Analogously to the case of operators, it fulfills  \bea\label{eq:kerspec}
\sigma_{p}(T)\cup\sigma_{c}(T)=\hat\sigma(T)\,.\eea
\begin{remark}\label{re:complex-conjugate}
For a closed relation $T$, one has that $\sigma(T^*)$ is the complex conjugate of $\sigma(T)$ \cite[Prop.\,2.5]{riossilva-expos}. The same holds for the continuous spectrum, since $\ran (T-\zeta I)$ and $\ran (T^*-\cc\zeta I)$ are simultaneously closed (see \cite[Lem.\,2.3]{MR0361889}).
\end{remark}\label{prop:complex-residual-spectrum}
\begin{proposition}\label{prop:conjugate-residual}
Let $T$ be a closed relation. If $\zeta\in\sigma_r(T)$ then $\cc\zeta\in\sigma_p(T^*)\backslash\sigma_c(T^*)$.
\end{proposition}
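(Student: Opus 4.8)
The plan is to read off from $\zeta\in\sigma_r(T)$ the two structural facts that $\ker(T-\zeta I)=\{0\}$ and that $\ran(T-\zeta I)$ is closed but proper, and then transport these across the adjoint to $T^*-\cc\zeta I$. Concretely, $\sigma_r(T)=\sigma(T)\backslash\hat\sigma(T)$ means exactly that $\zeta\in\sigma(T)$ while $\zeta\in\hat\rho(T)$. Since $\hat\sigma(T)=\sigma_p(T)\cup\sigma_c(T)$ by \eqref{eq:kerspec}, the membership $\zeta\notin\hat\sigma(T)$ splits into $\zeta\notin\sigma_p(T)$, i.e. $\ker(T-\zeta I)=\{0\}$, and $\zeta\notin\sigma_c(T)$, i.e. $\ran(T-\zeta I)$ is closed. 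So after this first unwinding the task reduces to showing that $\cc\zeta$ lies in the point spectrum of $T^*$ but not in its continuous spectrum.

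For the point-spectrum assertion I would use the description of the regular set given just before the statement. Since $\zeta\in\sigma(T)$ and $\zeta\in\hat\rho(T)$, we have $\zeta\in\hat\rho(T)\backslash\rho(T)$, and because the quantity \eqref{eq:constant-qregular} is constant on each connected component of $\hat\rho(T)$ with $\rho(T)$ being precisely the union of the components on which it vanishes, the point $\zeta$ must lie in a component where $\dim\Nk{\cc\zeta}{T^*}\neq 0$. As $\dim\Nk{\cc\zeta}{T^*}=\dim\ker(T^*-\cc\zeta I)$, this is exactly $\cc\zeta\in\sigma_p(T^*)$. A conceptually equivalent route is available through the adjoint identities: applying $\ker S^*=(\ran S)^{\perp}$ from \eqref{poHs} with $S=T-\zeta I$ gives $\ker(T^*-\cc\zeta I)=(\ran(T-\zeta I))^{\perp}$, and since $(T-\zeta I)^{-1}$ is single-valued (because $\ker(T-\zeta I)=\{0\}$) and bounded (because $\zeta\in\hat\rho(T)$) yet $\zeta\notin\rho(T)$, the only remaining obstruction is $\ran(T-\zeta I)\neq\H$, so this closed proper range has nonzero orthogonal complement.

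For $\cc\zeta\notin\sigma_c(T^*)$ I would invoke Remark~\ref{re:complex-conjugate}, which records that $\ran(T-\zeta I)$ and $\ran(T^*-\cc\zeta I)$ are simultaneously closed. The former is closed by the first paragraph, hence so is the latter, which is precisely the statement $\cc\zeta\notin\sigma_c(T^*)$. The main obstacle is the point-spectrum step, namely guaranteeing a genuinely nontrivial kernel for $T^*-\cc\zeta I$; the safest way is to lean on the already-stated characterization of $\rho(T)$ inside $\hat\rho(T)$ via \eqref{eq:constant-qregular}, rather than re-deriving from scratch that injectivity, boundedness, and non-surjectivity of $T-\zeta I$ force a closed proper range with nonzero complement.
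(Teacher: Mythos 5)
Your proposal is correct and follows essentially the same route as the paper: the paper likewise observes that $\zeta\in\hat\rho(T)\backslash\rho(T)$ makes $(T-\zeta I)^{-1}$ a closed bounded operator not defined on all of $\H$, so $\ran(T-\zeta I)$ is closed and proper, whence $\ker(T^*-\cc\zeta I)=[\ran(T-\zeta I)]^{\perp}\neq\{0\}$, and invokes the simultaneous closedness of $\ran(T-\zeta I)$ and $\ran(T^*-\cc\zeta I)$ for the continuous-spectrum claim. Your alternative route via the constancy of \eqref{eq:constant-qregular} on components of $\hat\rho(T)$ is a valid but inessential variant of the same argument.
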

\begin{proof}
Since $\zeta\in\sigma_r(T)$, one has  that $(T-\zeta I)^{-1}$ is closed and bounded, which is not defined on the whole space. In this fashion, $\ran (T-\zeta I)$ is closed as well as $\ran (T^*-\cc\zeta I)$ and 
\beao
\ker(T^*-\cc\zeta I)=[\ran (T-\zeta I)]^\perp\neq\{0\}\,.
\eeao
These facts imply the required.
\end{proof}

Before we proceed to the theory of invariant subspaces, we shall set the following. For a relation $T$ in $\HH$ and a linear set $\K$ in $\H$, we denote 
\begin{align}\label{treshH}
T_{\K}:=T\cap(\oP{\K}{\K})\,,
\end{align}
where $\oP{\K}{\K}$ represents the orthogonal sum of
two copies of $\K$. It is clear that  $T_{\H}=T$ and $T_{\{0\}}=\oP{\{0\}}{\{0\}}$.

\begin{definition}\label{defreV}Let $T$ be a relation in $\HH$. 
A subspace $\K\subset\H$ is called \emph{invariant} for $T$ 
  (briefly $T$-invariant), when the following conditions are true:
\begin{enumerate}[(i)]
\item \label{domeH} $\dom T=\oP{(\dom T\cap{\K})}{(\dom T\cap{\K^{\perp}})}$.
\item $\mul T=\oP{(\mul T\cap \K)}{(\mul T\cap \K^{\perp})}$.
\item $\dom T_{\K}=\dom T\cap{\K}$.
\end{enumerate}
\end{definition}
We see at once  that  $\H$ and $\{0\}$ are invariant, for any linear relation.

\begin{definition}\label{deredT}
For a relation $T$ in $\HH$, a subspace $\K\subset \H$ is said \emph{to reduce} $T$ if  
\beao
T=\oP{T_{\K}}{T_{\K^{\perp}}}\,
\eeao
\end{definition}
The subspaces $\K$ and $\K^{\perp}$ reduce $T$ simultaneously and in this case 
\begin{gather}\label{eq:deco-redu-space}
\begin{split}
\dom T=\oP{\dom T_{\K}}{\dom T_{\K^{\perp}}}\,,\quad \ker T=\oP{\ker T_{\K}}{\ker T_{\K^{\perp}}}\,,\\
\ran T=\oP{\ran T_{\K}}{\ran T_{\K^{\perp}}}\,,\quad \mul T=\oP{\mul T_{\K}}{\mul T_{\K^{\perp}}}\,.
\end{split}
\end{gather}

 \begin{remark}\label{re:Decom-into-relations} The existence of relations $T_{1}\subset\oP{\K}{\K}$ and
   $T_{2}\subset\oP{\K^{\perp}}{\K^{\perp}}$, in which 
 \beao
   T=\oP{T_{1}}{T_{2}}\,,\eeao
 implies that $\K$ reduces $T$, with  $T_{1}=T_{\K}$ and $T_{2}=T_{\K^{\perp}}$.   
 \end{remark}

\begin{proposition}\label{eqredin}
A subspace $\K$ reduces  $T$ if and only if $\K$ and $\K^{\perp}$ are  $T$-invariant.
\end{proposition}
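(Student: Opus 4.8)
The plan is to prove the two implications separately, and the first thing I would record is a symmetry observation: conditions (i) and (ii) of Definition~\ref{defreV} are literally the same statement for $\K$ and for $\K^{\perp}$, because each only refers to the unordered pair $\{\K,\K^{\perp}\}$ and to $(\K^{\perp})^{\perp}=\K$. Thus the only genuinely one-sided requirement is condition (iii), namely $\dom T_{\K}=\dom T\cap\K$ versus $\dom T_{\K^{\perp}}=\dom T\cap\K^{\perp}$. Keeping this in mind will let me avoid duplicating work.

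For the forward implication I would assume $\K$ reduces $T$, so $T=\oP{T_{\K}}{T_{\K^{\perp}}}$ and the decompositions \eqref{eq:deco-redu-space} are available. Since $T_{\K}\subset\oP{\K}{\K}$ forces $\dom T_{\K}\subset\K$ and $\mul T_{\K}\subset\K$ (and symmetrically for $\K^{\perp}$), I would first establish condition (iii): the inclusion $\dom T_{\K}\subseteq\dom T\cap\K$ is immediate, while for the reverse I take $f\in\dom T\cap\K$, write $f=f_{1}+f_{2}$ via \eqref{eq:deco-redu-space} with $f_{1}\in\dom T_{\K}\subseteq\K$ and $f_{2}\in\dom T_{\K^{\perp}}\subseteq\K^{\perp}$, and use uniqueness of the orthogonal decomposition (together with $f\in\K$) to force $f_{2}=0$, so $f=f_{1}\in\dom T_{\K}$. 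Conditions (i) and (ii) then follow by substituting $\dom T_{\K}=\dom T\cap\K$ and $\mul T_{\K}=\mul T\cap\K$ (obtained the same way) into the first and last identities of \eqref{eq:deco-redu-space}. The identical argument applied with $\K^{\perp}$ in place of $\K$ yields (iii) for $\K^{\perp}$, and since (i), (ii) are shared, $\K^{\perp}$ is $T$-invariant as well.

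For the reverse implication I would assume both $\K$ and $\K^{\perp}$ are $T$-invariant and prove the defining identity $T=\oP{T_{\K}}{T_{\K^{\perp}}}$ of Definition~\ref{deredT}. Because $T$ is a linear set and $T_{\K},T_{\K^{\perp}}\subset T$, the inclusion $\oP{T_{\K}}{T_{\K^{\perp}}}\subseteq T$ is automatic, so the content is the reverse inclusion. Given $\vE fg\in T$, I would split the domain component using condition (i), $f=f_{1}+f_{2}$ with $f_{1}\in\dom T\cap\K$ and $f_{2}\in\dom T\cap\K^{\perp}$, and then invoke condition (iii) for $\K$ and for $\K^{\perp}$ to identify these with $\dom T_{\K}$ and $\dom T_{\K^{\perp}}$, so that there exist $g_{1}\in\K$, $g_{2}\in\K^{\perp}$ with $\vE{f_{1}}{g_{1}}\in T_{\K}$ and $\vE{f_{2}}{g_{2}}\in T_{\K^{\perp}}$.

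The main obstacle is that $g_{1},g_{2}$ are not determined by $f_{1},f_{2}$, precisely because $T$ is multivalued, so $g_{1}+g_{2}$ need not equal $g$. The residual $\vE fg-\vE{f_{1}}{g_{1}}-\vE{f_{2}}{g_{2}}=\vE{0}{g-g_{1}-g_{2}}$ lies in $T$, hence $g-g_{1}-g_{2}\in\mul T$, and here condition (ii) does the decisive work: it lets me write $g-g_{1}-g_{2}=h_{1}+h_{2}$ with $h_{1}\in\mul T\cap\K$ and $h_{2}\in\mul T\cap\K^{\perp}$, so that $\vE{0}{h_{1}}\in T_{\K}$ and $\vE{0}{h_{2}}\in T_{\K^{\perp}}$. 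Regrouping then gives $\vE fg=\vE{f_{1}}{g_{1}+h_{1}}+\vE{f_{2}}{g_{2}+h_{2}}$ with the first summand in $T_{\K}$ and the second in $T_{\K^{\perp}}$, establishing $T\subseteq\oP{T_{\K}}{T_{\K^{\perp}}}$ and completing the equality; by Remark~\ref{re:Decom-into-relations} this exhibits $\K$ as reducing $T$.
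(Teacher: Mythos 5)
Your proposal is correct and follows essentially the same route as the paper: the forward direction extracts $\dom T_{\K}=\dom T\cap\K$ and $\mul T_{\K}=\mul T\cap\K$ from the orthogonal decomposition and feeds them into \eqref{eq:deco-redu-space}, while the converse splits $\vE fg\in T$ via condition (i), absorbs the residual $g-g_{1}-g_{2}\in\mul T$ via condition (ii), and regroups — exactly the paper's argument. Your version is in fact slightly more explicit than the paper's in flagging where condition (iii) is used to pass from $f_{1}\in\dom T\cap\K$ to an element of $T_{\K}$, but this is a presentational difference, not a different proof.
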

\begin{proof}
  If $\K$ reduces $T$, then by verifying the inclusions in both
  directions, one arrives at 
\beao
\begin{split}
\dom T_{\K}=\dom T\cap\K\,,\quad \ran T_{\K}=\ran T\cap\K\,,\\
\ker T_{\K}=\ker T\cap\K\,,\quad \mul T_{\K}=\mul T\cap\K\,.
\end{split}
\eeao
Therefore,  one has by \eqref{eq:deco-redu-space} that $\K$ is 
$T$-invariant. This also holds for $\K^\perp$, since it reduces $T$.

The converse follows once
we show that $T\subset \oP{T_{\K}}{T_{\K^{\perp}}}$. By inclusion, 
let $\vE fg\in T$ and the first condition of $T$-invariant implies that  there exist  \bea\label{eq:elements-of-inv0} \vE
as\in T_{\K};\quad \vE bt\in T_{\K^{\perp}}\,, \eea such that
$f=a+b$. Thus $\vE f{s+t}\in T$,  which yields
$\vE 0{g-(s+t)}\in T$. The  second condition of $T$-invariant indicates the  existence of  
\bea\label{eq:elements-of-inv1} \vE 0h\in T_{\K}\,;\quad \vE 0k\in
T_{\K^{\perp}}, \eea such that $g-(s+t)=h+k$. Hence,
\eqref{eq:elements-of-inv0} and \eqref{eq:elements-of-inv1} produce 
\begin{align}
\vE fg=\vE a{s+h}+\vE b{t+k}\in \oP{T_{\K}}{T_{\K^{\perp}}}\,,
\end{align}
as required. 
\end{proof}

Let us note the following property, which is required in the next assertion.  If $\K$ reduces $T$, then by a simple computation, one obtains  \bea\label{eq:closedT-Tinvariant} \cc
T=\oP{\cc T_{\K}}{\cc T_{\K^{\perp}}}\,.\eea This implies  that $T$ is
closed if and only if $T_{\K}$ and $T_{\K^{\perp}}$ are closed.

\begin{theorem}\label{teo:invariante-adjunto}
If $\K$ reduces  $T$, then  $\K$ reduces $T^{*}$ and the following holds 
\bea\label{eq:invariante-adjunto}
(\oP{T_{\K}}{T_{\K^{\perp}}})^{*}=\oP{(T_{\K})^{*}}{(T_{\K^{\perp}})^{*}}\,.
\eea
\end{theorem}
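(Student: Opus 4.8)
The plan is to reduce everything to the single identity \eqref{eq:invariante-adjunto}: once we know that $T^{*}$ splits as a direct sum of a relation lying inside $\oP{\K}{\K}$ and a relation lying inside $\mm{\K}$, Remark~\ref{re:Decom-into-relations} will immediately give that $\K$ reduces $T^{*}$, together with the identifications $(T^{*})_{\K}=(T_{\K})^{*}$ and $(T^{*})_{\K^{\perp}}=(T_{\K^{\perp}})^{*}$. Throughout I read $(T_{\K})^{*}$ as the adjoint of $T_{\K}$ taken \emph{inside} the Hilbert space $\K$ (so that it is a relation in $\oP{\K}{\K}$), and likewise $(T_{\K^{\perp}})^{*}$ inside $\K^{\perp}$; this reading is forced, since the adjoint in $\H$ would have multivalued part $(\ran T_{\K})^{\perp}$ reaching into $\K^{\perp}$, and then the right-hand side of \eqref{eq:invariante-adjunto} would not be a genuine orthogonal sum of relations in $\oP{\K}{\K}$ and $\mm{\K}$. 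The whole argument rests on the orthogonal decomposition $\HH=(\oP{\K}{\K})\oplus(\mm{\K})$ and on the vanishing of cross inner products between $\K$ and $\K^{\perp}$.

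First I would prove the inclusion $\oP{(T_{\K})^{*}}{(T_{\K^{\perp}})^{*}}\subset T^{*}$. A generic element is $\vE{h_{1}+h_{2}}{k_{1}+k_{2}}$ with $\vE{h_{1}}{k_{1}}\in(T_{\K})^{*}$ and $\vE{h_{2}}{k_{2}}\in(T_{\K^{\perp}})^{*}$, the $1$-entries lying in $\K$ and the $2$-entries in $\K^{\perp}$. Given any $\vE fg\in T$, the hypothesis that $\K$ reduces $T$ lets me write $f=f_{1}+f_{2}$, $g=g_{1}+g_{2}$ with $\vE{f_{1}}{g_{1}}\in T_{\K}$ and $\vE{f_{2}}{g_{2}}\in T_{\K^{\perp}}$. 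Expanding $\ip{k_{1}+k_{2}}{f_{1}+f_{2}}$ and discarding the cross terms that pair a $\K$-vector with a $\K^{\perp}$-vector, I obtain $\ip{k_{1}}{f_{1}}+\ip{k_{2}}{f_{2}}$, which by the two adjoint memberships equals $\ip{h_{1}}{g_{1}}+\ip{h_{2}}{g_{2}}=\ip{h_{1}+h_{2}}{g_{1}+g_{2}}$. Hence $\vE{h_{1}+h_{2}}{k_{1}+k_{2}}\in T^{*}$.

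For the reverse inclusion I would take $\vE hk\in T^{*}$ and split $h=h_{1}+h_{2}$, $k=k_{1}+k_{2}$ along $\H=\K\oplus\K^{\perp}$. To test $\vE{h_{1}}{k_{1}}\in(T_{\K})^{*}$, I pick an arbitrary $\vE{f_{1}}{g_{1}}\in T_{\K}\subset T$; then $\ip{k}{f_{1}}=\ip{h}{g_{1}}$, and since $f_{1},g_{1}\in\K$ the components $k_{2},h_{2}\in\K^{\perp}$ drop out, leaving $\ip{k_{1}}{f_{1}}=\ip{h_{1}}{g_{1}}$. As $\vE{f_{1}}{g_{1}}$ was arbitrary in $T_{\K}$, this is precisely $\vE{h_{1}}{k_{1}}\in(T_{\K})^{*}$. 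The symmetric computation with $T_{\K^{\perp}}$ yields $\vE{h_{2}}{k_{2}}\in(T_{\K^{\perp}})^{*}$, so $\vE hk\in\oP{(T_{\K})^{*}}{(T_{\K^{\perp}})^{*}}$. Combining the two inclusions establishes \eqref{eq:invariante-adjunto}, and Remark~\ref{re:Decom-into-relations}, applied to $T^{*}$, then delivers that $\K$ reduces $T^{*}$.

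I do not expect a genuine obstacle: the computation is driven entirely by the orthogonality $\K\perp\K^{\perp}$, which annihilates every cross term. The only point demanding care is bookkeeping—keeping straight that each $(T_{\K})^{*}$ is an adjoint internal to $\K$ rather than to $\H$, and checking that the two pieces of the split of $T^{*}$ really land in $\oP{\K}{\K}$ and $\mm{\K}$, so that Remark~\ref{re:Decom-into-relations} genuinely applies. (A slicker variant would note that the map $\vE fg\mapsto\vE{-g}f$ preserves the decomposition $\HH=(\oP{\K}{\K})\oplus(\mm{\K})$ and that orthogonal complementation respects this same decomposition, whence the identity drops out of $T^{*}=(-T^{-1})^{\perp}$; but the direct inner-product argument above is self-contained.)
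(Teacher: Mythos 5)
Your proof is correct, but it follows a genuinely different route from the paper's. The paper does not touch inner products at all: it invokes the identity $T^*=(-T^{-1})^{\perp}$ from \eqref{poHs} inside each summand, writing $\oP{-(\cc T_{\K})^{-1}}{(T_{\K})^{*}}=\oP\K\K$ and $\oP{-(\cc T_{\K^{\perp}})^{-1}}{(T_{\K^{\perp}})^{*}}=\mm{\K}$, then uses \eqref{eq:closedT-Tinvariant} to decompose $-(\cc T)^{-1}$ and observes that both $T^{*}$ and $\oP{(T_{\K})^{*}}{(T_{\K^{\perp}})^{*}}$ are the orthogonal complement of the same closed subspace of $\HH$, hence equal. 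This is exactly the ``slicker variant'' you sketch in your closing parenthetical. Your direct verification of the two inclusions is more elementary and more self-contained: it needs only the definition of the adjoint and the vanishing of cross terms, avoids the closure identity \eqref{eq:closedT-Tinvariant} entirely, and incidentally makes visible that the inclusion $T^{*}\subset\oP{(T_{\K})^{*}}{(T_{\K^{\perp}})^{*}}$ uses only $T_{\K},T_{\K^{\perp}}\subset T$ while the reverse inclusion is where the reducing hypothesis enters. What the paper's argument buys in exchange is brevity and a cleaner reuse of machinery already established in \eqref{poHs}. Your preliminary remark that $(T_{\K})^{*}$ must be read as the adjoint internal to $\K$ is well taken and is in fact the reading the paper's own display \eqref{eq:decomp-Tk} forces; both proofs then conclude identically via Remark~\ref{re:Decom-into-relations}.
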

\begin{proof}
By hypothesis $T=\oP{T_{\K}}{T_{\K^{\perp}}}$. Besides, the first property of \eqref{poHs} yields 
\begin{gather}\label{eq:decomp-Tk}
\oP{-(\cc T_{\K})^{-1}}{(T_{\K})^{*}}=\oP\K\K;\quad
\oP{-(\cc T_{\K^{\perp}})^{-1}}{(T_{\K^{\perp}})^{*}}=\oP{\K^{\perp}}{\K^{\perp}}\,.
\end{gather}
Thus, in view of \eqref{eq:closedT-Tinvariant} and \eqref{eq:decomp-Tk},
\begin{align*}
\oP{-(\cc T)^{-1}}{[(\oP{T_{\K})^{*}}{(T_{\K^{\perp}}})^{*}]}&=\oP{-[\oP{\cc T_{\K}}{\cc T_{\K^{\perp}}}]^{-1}}{[(\oP{T_{\K})^{*}}{(T_{\K^{\perp}}})^{*}]}\\
&=\oP{[\oP{-(\cc T_{\K})^{-1}}{(T_{\K})^{*}}]}{[\oP{-(\cc T_{\K^{\perp}})^{-1}}{(T_{\K^{\perp}})^{*}}]}\\
&=\oP{(\oP\K\K)}{(\oP{\K^{\perp}}{\K^{\perp}})}\\&=\HH=\oP{-(\cc T)^{-1}}{T^{*}}\,.
\end{align*}
whence one arrives at \eqref{eq:invariante-adjunto}. Moreover, \eqref{eq:decomp-Tk} also implies that $(T_{\K})^{*}\subset \oP{\K}{\K}$ and
$(T_{\K^{\perp}})^{*}\subset \oP{\K^{\perp}}{\K^{\perp}}$. Hence from Remark \eqref{re:Decom-into-relations}, 
 $\K$ reduces $T^{*}$. 
\end{proof}
Following  the last result, if $\K$ reduces $T$ then 
\bea\label{eq:reducing-distribution}
(T_{\K})^{*}=(T^*)_{\K};\quad (T_{\K^{\perp}})^{*}=(T^*)_{\K^{\perp}}\,.
\eea

We shall introduce a version of the Cayley transform for linear relations (\cf \cite{MR2093073}).

\begin{definition}
  For a relation $T$ and $\zeta \in \C$,
 we define the \emph{$Z$ transform} of $T$ by \beao \CZ \zeta T:=\llb \vE
  {g-\overline{\zeta}f}{\cc\zeta g-|\zeta|^{2} f}\ :\ \vE fg \in T\rrb\,.
  \eeao
\end{definition}
The $Z$ transform is a linear relation with 
\begin{align}\label{CtoT}
\begin{split}
 \dom \CZ \zeta T=\ran (T-\overline{\zeta}I)\,,& \quad
 \ran \CZ \zeta T=\ran (T-\zeta I)\,,\\ \mul \CZ \zeta T=\ker
 (T-\overline{\zeta} I)\,,& \quad \ker \CZ \zeta T=\ker (T-\zeta I)\,.
 \end{split}
\end{align}Moreover, the following properties hold (see \cite[Lems.\,2.6,
2.7]{MR0361889} and \cite[Props.\,3.6, 3.7]{MR2093073}).
For any $\zeta \in\C$:
\begin{enumerate}[{(i)}]
 \item \label{cay01} $\CZ \zeta {\CZ \zeta T}=T$.
 \item $\CZ \zeta T\subset \CZ \zeta S\ \Leftrightarrow\ T\subset S$.\label{tres}
 \item $\CZ {-\zeta} T=-\CZ \zeta {-T}$.
 \item $\CZ \zeta {T^{-1}}=\CZ {\overline {\zeta}} T=(\CZ \zeta {T})^{-1}$, if $\abs{z}=1$. 
  \end{enumerate}
For any $\zeta \in \C\backslash \R$:
  \begin{enumerate}[{(i)}]
  \setcounter{enumi}{4}
 \item $\CZ { \zeta} {T\dotplus S}=\CZ { \zeta} T\dotplus \CZ { \zeta} S$.
 \item \label{cay07} $\CZ{\pm i}{\oP TS}=\oP{\CZ {\pm i} T}{\CZ {\pm i} S}$.
 \item\label{cay05} $\CZ { \zeta} {T^*}=(\CZ {\overline{\zeta}} {T})^{*}$.
 \item\label{cay10} $\cc{\CZ \zeta T}=\CZ \zeta {\cc T}$.
\end{enumerate}

\begin{remark}\label{insub00}
For any linear set $\K\subset\H$, the following holds:
 \begin{align}\label{eqsubes}
\CZ {\zeta}{\oP{\K}{\K}}=\oP{\K}{\K}\,\quad(\zeta\in\C)\,.
\end{align}
Indeed, it is straightforward to see that  $\CZ {\zeta}{\oP{\K}{\K}}\subset\oP{\K}{\K}$ and the other inclusion follows applying the property \eqref{cay01} of the $Z$ transform. 
\end{remark}

\begin{theorem}\label{redsubincal}
  A subspace $\K$ reduces $T$ if and only if it reduces
  $\CZ{\pm i} T$ (the assertion is meant to hold separately for $+i$
  and $-i$).
\end{theorem}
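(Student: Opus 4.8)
The plan is to reduce everything to three structural properties of the $Z$ transform already recorded above: its distributivity over orthogonal sums at $\zeta=\pm i$ (property~\eqref{cay07}), its monotonicity with respect to inclusion (property~\eqref{tres}), and the fact that it is an involution (property~\eqref{cay01}). Combined with Remark~\ref{insub00}, which says that relations of the form $\oP{\K}{\K}$ are fixed by the $Z$ transform, and with Remark~\ref{re:Decom-into-relations}, which detects a reducing subspace from a decomposition into summands lying in $\oP{\K}{\K}$ and $\oP{\K^{\perp}}{\K^{\perp}}$, the argument becomes essentially bookkeeping.

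For the forward direction I would start from $T=\oP{T_{\K}}{T_{\K^{\perp}}}$ and apply property~\eqref{cay07} to obtain $\CZ{\pm i}T=\oP{\CZ{\pm i}{T_{\K}}}{\CZ{\pm i}{T_{\K^{\perp}}}}$. It then remains only to verify that the two summands sit inside $\oP{\K}{\K}$ and $\oP{\K^{\perp}}{\K^{\perp}}$ respectively. This is where monotonicity enters: from $T_{\K}\subset\oP{\K}{\K}$, properties~\eqref{tres} and Remark~\ref{insub00} give $\CZ{\pm i}{T_{\K}}\subset\CZ{\pm i}{\oP{\K}{\K}}=\oP{\K}{\K}$, and likewise $\CZ{\pm i}{T_{\K^{\perp}}}\subset\oP{\K^{\perp}}{\K^{\perp}}$. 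Remark~\ref{re:Decom-into-relations} then yields that $\K$ reduces $\CZ{\pm i}T$.

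The converse I would obtain for free from the involution: applying the forward implication to the relation $\CZ{\pm i}T$ (which by hypothesis is reduced by $\K$) shows that $\K$ reduces $\CZ{\pm i}{\CZ{\pm i}T}$, and this equals $T$ by~\eqref{cay01}. This self-inverse symmetry is the one genuinely pleasant point, since it spares recomputing the inverse transform by hand and collapses both implications into a single computation.

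The only step that is not pure formality is the appeal to property~\eqref{cay07}, and it is precisely here that the restriction to $\zeta=\pm i$ is essential: the $Z$ transform distributes over orthogonal direct sums only at these two points, whereas for general $\zeta\in\C\setminus\R$ one has merely the weaker additivity over $\dotplus$. I therefore expect \eqref{cay07} to be the load-bearing ingredient, with everything else reducing to inclusion-chasing and the invocation of the involution.
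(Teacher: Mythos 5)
Your proof is correct and follows essentially the same route as the paper: both directions rest on the distributivity of the $Z$ transform over orthogonal sums at $\zeta=\pm i$ (property~\eqref{cay07}), the invariance of $\oP{\K}{\K}$ under the transform from Remark~\ref{insub00}, Remark~\ref{re:Decom-into-relations} to detect the reducing subspace, and the involution~\eqref{cay01} to get the converse by applying the forward implication to $\CZ{\pm i}T$. Your explicit appeal to monotonicity~\eqref{tres} only makes precise an inclusion the paper passes over silently, so there is nothing to add.
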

\begin{proof}
If  $\K$ reduces $T$, then  $T=\oP{T_{\K}}{T_{\K^{\perp}}}$ and   thus
\begin{align*}
\CZ{\pm i} T=\oP{\CZ{\pm i}{T_{\K}}}{\CZ{\pm i}{T_{{ \K}^{\perp}}}}\,.
\end{align*}
Since $T_{\K}\subset\oP{\K}{\K}$ and $T_{\K^{\perp}}\subset\oP{\K^{\perp}}{\K^{\perp}}$, one has by \eqref{eqsubes} that 
\begin{align*}
\CZ{\pm i}{T_{\K}}&\subset\oP{\K}{\K};\\
\CZ{\pm i}{T_{\K^{\perp}}}&\subset\oP{\K^{\perp}}{\K^{\perp}}\,.
\end{align*}
Therefore $\K$ reduces  $\CZ{\pm i}T$. The converse follows replacing $T$ by $\CZ{\pm i}T$, in the reasoning above. 
\end{proof}

\section{The canonical decomposition of dissipative relations}
\label{sec:decomposition-of-relations}

We begin this section with a brief exposition on general concepts of  contractions. We recall that a linear operator $V$ in $\HH$ (seen as a linear relation) is a \emph{contraction} if
it is bounded with $\no V\leq 1$. Particularly, $V$ is an \emph{isometry} if $V^{-1}\subset V^{*}$ or \emph{unitary} whenever $V^{-1}=V^{*}$. In both cases its norm is equal one.

We say that $V$ is a \emph{maximal contraction} if it does not admit proper contractive extensions. This property is equivalent to say that $V$ belongs to the class  $\cA B(\H)$.

\begin{definition}
  A contraction $V$ is said to be \emph{completely nonunitary} (c.n.u. for short), when there is no nonzero reducing subspace $\K$
  for $V$, in which $V_{\K}$ is  unitary.
\end{definition}

The following result is an extension of the so-called
Sz. Nagy-Foia\c{s}-Langer decomposition (see \cite[Chap.\,I,\,Sec.\,3,\,Thm.\, 3.2]{MR2760647}), which
is proven for contractions in $\cA B(\H)$.

\begin{theorem}\label{cndfdes}
  For every closed contraction $V$, there exists a unique reducing
  subspace $\K$ for $V$, such that $V_{\K}$ is unitary
  and $V_{\K^{\perp}}$ is completely nonunitary.
\end{theorem}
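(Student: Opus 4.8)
The plan is to reduce the assertion to the classical Sz.\ Nagy-Foia\c{s}-Langer theorem, which is available for contractions in $\cA B(\H)$, by extending $V$ to an everywhere-defined contraction. Since a closed contraction is in particular a bounded operator, boundedness together with closedness forces $\H_{0}:=\dom V$ to be a closed subspace of $\H$. I would then define $\tilde V\in\cA B(\H)$ by setting $\tilde V:=V$ on $\H_{0}$ and $\tilde V:=0$ on $\H_{0}^{\perp}$; a one-line estimate gives $\no{\tilde V}\leq 1$, so the classical result applies and produces a unique reducing subspace $\K$ for $\tilde V$ with $\tilde V_{\K}$ unitary and $\tilde V_{\K^{\perp}}$ completely nonunitary.

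The key observation, which I would establish first, is that a unitary restriction must live inside the domain. If $w=w_{0}+w_{1}$ with $w_{0}\in\H_{0}$, $w_{1}\in\H_{0}^{\perp}$ lies in a reducing subspace on which $\tilde V$ restricts isometrically, then $\no{V w_{0}}^{2}=\no{\tilde V w}^{2}=\no{w}^{2}=\no{w_{0}}^{2}+\no{w_{1}}^{2}$, while $\no{V w_{0}}\leq\no{w_{0}}$ forces $w_{1}=0$. Hence $\K\subset\H_{0}$, so $\tilde V$ and $V$ agree on $\K$ and $V_{\K}=\tilde V_{\K}$ is unitary. I would then verify that $\K$ reduces $V$ as a relation: since $\K\subset\H_{0}$ one has $\H_{0}^{\perp}\subset\K^{\perp}$, and for $\vE fg\in V$ the splitting of $f$ along $\K\oplus\K^{\perp}$ remains inside $\H_{0}$, so $g$ decomposes accordingly and $V=\oP{V_{\K}}{V_{\K^{\perp}}}$ follows from Remark~\ref{re:Decom-into-relations}.

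Next I would show that $V_{\K^{\perp}}$ is completely nonunitary. Observe that $\tilde V_{\K^{\perp}}$ is precisely the zero-extension of $V_{\K^{\perp}}$ to $\K^{\perp}=\oP{\H_{0}^{\perp}}{(\oM{\H_{0}}{\K})}$. If a nonzero $\L\subset\K^{\perp}$ reduced $V_{\K^{\perp}}$ with $(V_{\K^{\perp}})_{\L}$ unitary, the isometry argument above would give $\L\subset\dom V_{\K^{\perp}}\subset\H_{0}$, whence $\L$ would reduce $\tilde V_{\K^{\perp}}$ with unitary restriction, contradicting complete nonunitarity of $\tilde V_{\K^{\perp}}$. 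Uniqueness is symmetric: any reducing $\K'$ for $V$ with $V_{\K'}$ unitary satisfies $\K'\subset\H_{0}$ and reduces $\tilde V$ with $\tilde V_{\K'}$ unitary, so $\K'\subset\K$ by the maximality built into the classical unitary part; the difference $\oM{\K}{\K'}$ then reduces $V_{\K'^{\perp}}$ with unitary restriction and hence must vanish, yielding $\K'=\K$.

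The step I expect to be the main obstacle is the faithful transfer of the notions \emph{reducing} and \emph{unitary restriction} between the relation $V$, whose domain is a proper closed subspace, and its everywhere-defined extension $\tilde V$. All the genuine content lies in showing that a unitary restriction forces the relevant subspace into $\dom V$, and that, once this is known, the relation-theoretic reducing condition of Definition~\ref{deredT} and the ordinary operator-theoretic reducing condition for $\tilde V$ become equivalent on subspaces of $\H_{0}$; the remaining verifications are routine bookkeeping.
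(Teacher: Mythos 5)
Your proposal is correct and follows essentially the same route as the paper's proof: both extend $V$ by zero on $(\dom V)^{\perp}$ (closed since $V$ is a closed contraction) to obtain an everywhere-defined contraction, apply the classical Sz.\ Nagy--Foia\c{s}--Langer decomposition to that extension, and use the same norm identity $\no{f_1}^2\geq\no{g}^2=\no{f_1}^2+\no{f_2}^2$ to force the unitary part into $\dom V$. The only cosmetic difference is that you spell out the uniqueness step in more detail, where the paper simply notes that any reducing subspace for $V$ also reduces the extension and invokes uniqueness there.
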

\begin{proof}
We begin by denoting 
\begin{gather}\label{eq-hatVinucnu}
\hat V:=\oP{V}{W}\,,\quad\mbox{where}\quad W=\llb \vE h0\ :\ h\in \oM\H{\dom V}\rrb\,.
\end{gather}
Inasmuch as $\dom V$ is closed, $\hat V$ is a maximal contraction. Then, the
Sz. Nagy-Foia\c{s}-Langer decomposition asserts that there exists a unique reducing  subspace $\K$ for $\hat V$,  such that  
 $\hat V_{\K}$ is unitary and $\hat V_{\K^{\perp}}$ is c.n.u. Thus, for every $\vE fg\in \hat V_{\K}\subset \hat V$, in view of
\eqref{eq-hatVinucnu}, there is $\vE {f_{1}}g\in V$ and $f_{2}\in (\dom V)^{\perp}$, such that   
$ f=f_{1}+f_{2}$. Thereby,
\begin{align*}
 \no {f_{1}}^{2}\geq\no g^{2}=\no f^{2}= \no {f_{1}+f_{2}}^{2}=\no {f_{1}}^{2}+\no {f_{2}}^{2}\,,
\end{align*}
wherefrom  $f_{2}=0$.  Hence,  $ \hat V_{\K}\subset V_{\K}$ and they are the same, since $V\subset \hat V$. The previous reasoning implies that $\K$ reduces $V$ as well as $W\subset \hat V_{\K^{\perp}}$. Furthermore,  
$V_{K^{\perp}}= \oM{\hat V_{\K^{\perp}}}{W}$, which is a
c.n.u. contraction. The uniqueness follows directly, bearing in mind that a  reducing subspace for $V$, also reduces $\hat V$.
\end{proof}

In what follows, we will turn our attention to a particular class of isometries, known as unilateral shifts.

For an isometry $V$ in $\cA B(\H)$, we say that a subspace $\cA L\subset\H$ is a {\it wandering} space for $V$, if  for all $n,m\in\N\cup\{0\}$, with $n\neq m$,  
\begin{align*}
V^m\cA L\perp V^n\cA L\,. 
\end{align*}

\begin{definition}\label{def:desunilateral}
  An isometric operator $V$ in $\cA B(\H)$ is called  \emph{unilateral
  shift}, if there exists a wandering space $\cA L$ for $V$, 
  such that 
  \begin{gather}\label{eq:Vnwandering}
\cA L\oplus V\cA L\oplus V^2\cA L\oplus\dots=\H\,.\end{gather}
 \end{definition}
The wandering space for a unilateral shift $V$ is uniquely determined by 
\beao\cA L=\oM{\H}{\ran V}\,.\eeao
Besides, it is straightforward to computes  that 
\beao
V^*=\oP{V^{-1}}{\llb\vE{l}0\,:\,l\in\cA L\rrb}\,.
\eeao

Let us introduce the following assertion which is well-known as the von Neumann-Wold decomposition \cite[Chap.\,I,\,Sec.\,1,\,Thm.\, 1.1]{MR2760647}.

\begin{theorem}\label{Desconucnu}
  For every isometric operator $V$ in $\cA B(\H)$, there exists a
  unique reducing subspace $\K$ for $V$, such that $V_{\K}$ is unitary
  and $V_{\K^{\perp}}$ is a unilateral shift. Namely,
\begin{gather}\label{carHhanm}
  \K:=\bigcap_{n=0}^\infty \ran V^{n}\quad\mbox{ then }
  \quad \K^{\perp}=\bigoplus_{n=0}^\infty V^n\cA L\,, \quad \mbox{ where }\quad \cA L=\oM{\H}{\ran V}\,.
\end{gather}
\end{theorem}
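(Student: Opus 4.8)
The plan is to build the decomposition explicitly around the candidate wandering space $\mathcal{L}=\oM{\H}{\ran V}$ and then verify the three required properties together with uniqueness. First I would record that $\mathcal{L}$ is indeed wandering for $V$: for $n>m$ and $x,y\in\mathcal{L}$, the isometry identity $\ip{V^m x}{V^n y}=\ip{x}{V^{n-m}y}$ pits $V^{n-m}y\in\ran V$ against $x\in\mathcal{L}=(\ran V)^{\perp}$, giving orthogonality. This is a one-line computation and poses no difficulty.

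The core of the argument is the finite orthogonal decomposition
\[
\H=\ran V^{N}\oplus\bigoplus_{n=0}^{N-1}V^{n}\mathcal{L}\,,\qquad N\in\N\,,
\]
which I would prove by induction on $N$. The base case $N=1$ is the defining splitting $\H=\ran V\oplus\mathcal{L}$. For the inductive step, applying the isometry $V^{N}$ to $\H=\ran V\oplus\mathcal{L}$ and using that $V^{N}$ preserves orthogonality yields $\ran V^{N}=\ran V^{N+1}\oplus V^{N}\mathcal{L}$; substituting into the inductive hypothesis advances $N$ to $N+1$. Letting $N\to\infty$, I would identify $\K=\bigcap_{n=0}^{\infty}\ran V^{n}$ with the orthogonal complement of $\mathcal{M}:=\bigoplus_{n=0}^{\infty}V^{n}\mathcal{L}$: a vector lies in every $\ran V^{N}=\bigl(\bigoplus_{n=0}^{N-1}V^{n}\mathcal{L}\bigr)^{\perp}$ precisely when it is orthogonal to each $V^{n}\mathcal{L}$, hence to $\mathcal{M}$. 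This gives $\K^{\perp}=\mathcal{M}=\bigoplus_{n=0}^{\infty}V^{n}\mathcal{L}$, which is the asserted formula, and is the step I expect to demand the most care (the limit and orthocomplement bookkeeping).

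With the subspaces in hand I would check reducibility and the two structural properties. The subspace $\mathcal{M}$ is $V$-invariant since $V\mathcal{M}=\bigoplus_{n=1}^{\infty}V^{n}\mathcal{L}\subseteq\mathcal{M}$, and $\K$ is $V$-invariant because $V(\ran V^{n})=\ran V^{n+1}\subseteq\ran V^{n}$. As $V$ is an everywhere-defined bounded operator, the conditions of Definition~\ref{defreV} reduce to ordinary invariance, so by Proposition~\ref{eqredin} the subspace $\K$ reduces $V$. For $V_{\K}$ to be unitary it remains to prove surjectivity onto $\K$: given $x\in\K$, injectivity of the isometry $V$ produces a unique $w$ with $Vw=x$, and from $x\in\ran V^{N+1}$ one gets $w\in\ran V^{N}$ for every $N$, whence $w\in\K$ and $V\K=\K$. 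For $V_{\K^{\perp}}$, the powers satisfy $(V_{\K^{\perp}})^{n}\mathcal{L}=V^{n}\mathcal{L}$, so $\mathcal{L}$ is a wandering space for $V_{\K^{\perp}}$ with $\bigoplus_{n=0}^{\infty}(V_{\K^{\perp}})^{n}\mathcal{L}=\K^{\perp}$, which is exactly Definition~\ref{def:desunilateral}.

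Finally, for uniqueness I would start from any reducing $\K'$ with $V_{\K'}$ unitary and $V_{\K'^{\perp}}$ a unilateral shift. Decomposing the powers along $\K'\oplus\K'^{\perp}$ gives $\ran V^{n}=\K'\oplus\ran(V_{\K'^{\perp}})^{n}$; since a unilateral shift $S$ satisfies $\bigcap_{n=0}^{\infty}\ran S^{n}=\{0\}$ (immediate from $\ran S^{n}=\bigoplus_{k\geq n}S^{k}\mathcal{L}'$), intersecting over $n$ forces $\bigcap_{n=0}^{\infty}\ran V^{n}=\K'$, hence $\K'=\K$. The only mild subtlety here is recalling the triviality of $\bigcap_{n}\ran S^{n}$ for a shift, which the preceding paragraph already supplies.
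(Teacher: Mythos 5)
Your proof is correct, but note that the paper does not actually prove Theorem~\ref{Desconucnu}: it quotes it verbatim as the classical von Neumann--Wold decomposition and refers to Sz.-Nagy--Foia\c{s} for the argument. What you have written is essentially the standard textbook proof, and all the key steps check out: the finite orthogonal splitting $\H=\ran V^{N}\oplus\bigoplus_{n=0}^{N-1}V^{n}\cA L$ by induction (the base case uses that an everywhere-defined isometry has closed range, which is worth saying explicitly), the passage to the limit identifying $\K^{\perp}$ with the closed span of the $V^{n}\cA L$, surjectivity of $V$ on $\K$ via injectivity plus membership of preimages in every $\ran V^{n}$, and the uniqueness argument from $\bigcap_{n}\ran S^{n}=\{0\}$ for a shift. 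Your observation that, for an operator in $\cA B(\H)$, Definition~\ref{defreV} collapses to ordinary invariance of $\K$ and $\K^{\perp}$ (so Proposition~\ref{eqredin} applies) is exactly the bridge needed to phrase the classical statement in the paper's relation-theoretic language, and is the only point at which your proof interacts with the paper's framework rather than with standard Hilbert-space theory. In short: the paper buys the result by citation; you supply a complete, self-contained derivation, at the cost of about a page of routine but careful bookkeeping.
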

In the last result, the space $\K$ may be trivial or the whole space.  

\begin{corollary}\label{coro:cnuunishift}
An isometric operator in $\cA B(\H)$ is a unilateral shift if and only if it is completely nonunitary.
\end{corollary}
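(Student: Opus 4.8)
The plan is to reduce everything to the von Neumann--Wold decomposition (Theorem~\ref{Desconucnu}), which for an isometric $V\in\cA B(\H)$ produces the reducing subspace $\K=\bigcap_{n=0}^{\infty}\ran V^{n}$ with $V_{\K}$ unitary and $V_{\K^{\perp}}$ a unilateral shift. The entire statement then follows from two observations: that $V$ is completely nonunitary precisely when $\K=\{0\}$, and that a unilateral shift forces $\K=\{0\}$.

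First I would establish that $\K$ is the \emph{largest} reducing subspace on which $V$ acts unitarily. Indeed, if $\cA M\subset\H$ reduces $V$ and $V_{\cA M}$ is unitary, then $V\cA M=\cA M$, so that $\cA M=V^{n}\cA M\subset\ran V^{n}$ for every $n\in\N\cup\{0\}$, whence $\cA M\subset\bigcap_{n=0}^{\infty}\ran V^{n}=\K$. Consequently there is a nonzero reducing subspace on which $V$ is unitary if and only if $\K\neq\{0\}$; equivalently, $V$ is completely nonunitary if and only if $\K=\{0\}$.

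With this in hand the equivalence is immediate. If $V$ is a unilateral shift with wandering space $\cA L=\oM{\H}{\ran V}$, then $\ran V^{n}=\bigoplus_{k=n}^{\infty}V^{k}\cA L$, and since the subspaces $V^{k}\cA L$ are mutually orthogonal and span $\H$ by \eqref{eq:Vnwandering}, their tails intersect trivially, i.e. $\K=\bigcap_{n=0}^{\infty}\ran V^{n}=\{0\}$; hence $V$ is completely nonunitary by the previous step. Conversely, if $V$ is completely nonunitary then $\K=\{0\}$, so $\K^{\perp}=\H$ and $V=V_{\K^{\perp}}$ is a unilateral shift by Theorem~\ref{Desconucnu}.

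The argument is essentially a bookkeeping exercise on top of Theorem~\ref{Desconucnu}, so I do not expect a genuine obstacle. The only point demanding a little care is the computation $\bigcap_{n=0}^{\infty}\ran V^{n}=\{0\}$ for a unilateral shift, where one must justify identifying $\ran V^{n}$ with the tail $\bigoplus_{k=n}^{\infty}V^{k}\cA L$ (using that $V$ is an isometry, so that $V^{n}$ carries the orthogonal decomposition of $\H$ onto the corresponding tail) and then invoke orthogonality to conclude the intersection is trivial; the edge case $\K=\{0\}$, where the unitary part is absent, is absorbed automatically.
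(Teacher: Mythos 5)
Your proposal is correct and follows essentially the same route as the paper: both directions rest on the von Neumann--Wold decomposition, and the forward implication in each case comes down to showing that a reducing subspace on which $V$ is unitary satisfies $\cA M=V^{n}\cA M\subset\ran V^{n}$ and is therefore orthogonal to every $V^{n}\cA L$, hence trivial. Your reorganization via the maximality of $\K=\bigcap_{n}\ran V^{n}$ is only a cosmetic variant of the paper's argument.
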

\begin{proof}
Assume $V$ is a unilateral shift and suppose that $\K$ is a reducing subspace for $V$, in which $V_\K$ is unitary. Then $\K=V^n\K\subset V^n\H$, for $n=0,1,2,\dots$ On the other hand, in view of \eqref{eq:Vnwandering}, the wandering space $\cA L$ for $V$, satisfies 
\beao
V^n\cA L=\oM{V^n\H}{V^{n+1}\H}\,.
\eeao
Consequently,  $\K\perp V^n\cA L$, for all $n\in\N\cup\{0\}$. Thus  \eqref{eq:Vnwandering} implies $\K=\{0\}$ and hence $V$ is c.n.u. The converse follows from Theorem~\ref{Desconucnu}.
\end{proof}

Theorems~\ref{cndfdes} and~\ref{Desconucnu} present  two kind of  decompositions for closed contraction, uniquely determined by  their unitary and completely nonunitary parts.  Hereinafter, we will address the dissipative relations to give the counterpart to these theorems.

\begin{definition}
A relation $L$ is called \emph{dissipative} if for all $\vE fg\in L$, 
\beao\im \ip fg\geq0\,.\eeao
In particular, $L$ is \emph{symmetric} if $L\subset L^*$ and \emph{selfadjoint} when $L=L^*$. Furthermore, $L$ is say to be \emph{maximal dissipative} if it does not have proper dissipative extensions. 
\end{definition}

For the reader's convenience, the following result from \cite{riossilva-expos}
 is brought up.
\begin{proposition}\label{caydac}
 If $\zeta$ is in the upper half plane $\C_{+}$, such that $\abs{\zeta}=1$, then  a 
  linear relation $L$ is (closed, maximal) dissipative (symmetric,
  selfadjoint) if and only if $\CZ\zeta L$ is a (closed, maximal)
  contraction (isometry, unitary).
\end{proposition}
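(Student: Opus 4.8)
\section*{Proof proposal}

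The plan is to reduce every assertion to the algebraic properties (i)--(viii) of the $Z$ transform already recorded, the single analytic input being a norm identity that converts the dissipativity inequality into the contraction inequality. Fix $\zeta\in\C_+$ with $|\zeta|=1$, so that for $\vE fg\in L$ the corresponding element of $\CZ\zeta L$ is $\vE{g-\cc\zeta f}{\cc\zeta g-f}$. First I would compute, expanding both norms and using $|\zeta|=1$ together with the convention that $\ip\cdot\cdot$ is antilinear in its left argument,
\begin{align*}
\no{g-\cc\zeta f}^2-\no{\cc\zeta g-f}^2=(\zeta-\cc\zeta)\big(\ip gf-\ip fg\big)=4(\im\zeta)\,\im\ip fg\,.
\end{align*}
Because $\im\zeta>0$, this shows at once that $\im\ip fg\ge0$ for every $\vE fg\in L$ if and only if $\no{\cc\zeta g-f}\le\no{g-\cc\zeta f}$ for every element of $\CZ\zeta L$; that is, $L$ is dissipative iff $\CZ\zeta L$ satisfies the contraction inequality. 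Since the inequality forces $\mul\CZ\zeta L=\{0\}$, the transform is automatically single-valued, hence a genuine contraction, and no separate operator-versus-relation argument is needed.

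Granting the dissipative--contraction equivalence, I would treat the remaining adjectives purely formally. For closedness I would invoke $\cc{\CZ\zeta T}=\CZ\zeta{\cc T}$ (property~\eqref{cay10}) together with the involution $\CZ\zeta{\CZ\zeta T}=T$ (property~\eqref{cay01}): these give $L=\cc L\Leftrightarrow\CZ\zeta L=\cc{\CZ\zeta L}$. For the symmetric and selfadjoint cases, set $V=\CZ\zeta L$ and read off from property~(iv), valid since $|\zeta|=1$, that $V^{-1}=\CZ{\cc\zeta}L$, and from property~\eqref{cay05} that $V^*=\CZ{\cc\zeta}{L^*}$. Monotonicity (property~\eqref{tres}) then turns $L\subset L^*$ into $\CZ{\cc\zeta}L\subset\CZ{\cc\zeta}{L^*}$, i.e.\ $V^{-1}\subset V^*$, and conversely; this is exactly symmetric $\Leftrightarrow$ isometry, and the equality case $L=L^*\Leftrightarrow V^{-1}=V^*$ gives selfadjoint $\Leftrightarrow$ unitary.

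Finally, the maximal statements would follow uniformly from monotonicity and the involution. If $L$ is maximal dissipative and $V=\CZ\zeta L$ admitted a proper contractive extension $V'$, then $L':=\CZ\zeta{V'}$ would, by the dissipative--contraction equivalence, be dissipative, and by property~\eqref{tres} would properly contain $\CZ\zeta V=L$, contradicting maximality; the reverse implication and the maximal-symmetric case are identical with ``dissipative/contraction'' replaced by ``symmetric/isometry''. I expect the only real work to be the norm identity of the first paragraph---the sign bookkeeping in $(\zeta-\cc\zeta)(\ip gf-\ip fg)$ under the left-antilinear convention is where an error would most easily creep in, whereas once that identity is in hand every other equivalence is a one-line consequence of the catalogued properties of $\CZ\zeta\cdot$.
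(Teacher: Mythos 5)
Your proof is correct. Note that the paper itself offers no proof of this proposition---it is imported verbatim from \cite{riossilva-expos}---so there is no in-text argument to compare against; your route (the identity $\no{g-\cc\zeta f}^2-\no{\cc\zeta g-f}^2=4\,(\im\zeta)\,\im\ip fg$, which under $\im\zeta>0$ converts dissipativity into the contraction inequality and simultaneously kills $\mul\CZ\zeta L$, followed by the formal properties (i), (ii), (iv), (vii), (viii) of the $Z$ transform for the closed, symmetric, selfadjoint and maximal cases) is the standard one, and every step, including the sign bookkeeping under the left-antilinear convention, checks out.
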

 The above assertion clarifies that the $Z$
 transform gives a one-to-one correspondence between contractions and
 dissipative relations.
 
\begin{definition}We call a dissipative relation $L$ \emph{completely nonselfadjoint}
(briefly c.n.s.), if there is no nonzero reducing subspace
$\K$ for $L$, in which $L_{\K}$ is selfadjoint.
\end{definition}

\begin{remark}\label{re:c.n.s-operator}
If a closed dissipative relation $L$ is c.n.s., then it is an operator. Indeed, since $\dom L\subset (\mul L)^\perp$ (\cf \cite[Sec.\,2]{MR3057107}), one has that $\mul L$ is a reducing subspace for $L$, in which $L$ is selfadjoint. Hence, $\mul L=\{0\}$. 
\end{remark}

\begin{proposition}\label{cnucna}
 A relation $L$ is a completely nonselfadjoint, dissipative relation if and only if $V=\CZ i L$ is a
  completely nonunitary, contraction.
\end{proposition}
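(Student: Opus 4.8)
The plan is to split the statement into the part already handled by earlier results and the one genuinely new ingredient. Since $i\in\C_{+}$ with $\abs{i}=1$, Proposition~\ref{caydac} applied at $\zeta=i$ immediately gives that $L$ is dissipative if and only if $V=\CZ{i}{L}$ is a contraction. Thus it remains only to match the \emph{completely nonselfadjoint} condition on $L$ with the \emph{completely nonunitary} condition on $V$. The organizing principle is Theorem~\ref{redsubincal}, which asserts that a subspace $\K$ reduces $L$ if and only if it reduces $V$; this sets up a bijection between the reducing subspaces of $L$ and those of $V$, so it suffices to show that along any such common reducing subspace $\K$ the restriction $L_{\K}$ is selfadjoint if and only if $V_{\K}$ is unitary.

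The key technical step is to identify the restriction of $V$ with the $Z$ transform of the restriction of $L$, namely $V_{\K}=\CZ{i}{L_{\K}}$. I would obtain this by starting from the reducing decomposition $L=\oP{L_{\K}}{L_{\K^{\perp}}}$, applying $\CZ{i}{\cdot}$, and invoking property~\eqref{cay07} (with $\zeta=i$) to write $V=\oP{\CZ{i}{L_{\K}}}{\CZ{i}{L_{\K^{\perp}}}}$. Because $L_{\K}\subset\oP{\K}{\K}$ and $L_{\K^{\perp}}\subset\oP{\K^{\perp}}{\K^{\perp}}$, monotonicity~\eqref{tres} together with Remark~\ref{insub00} forces $\CZ{i}{L_{\K}}\subset\oP{\K}{\K}$ and $\CZ{i}{L_{\K^{\perp}}}\subset\oP{\K^{\perp}}{\K^{\perp}}$. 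Remark~\ref{re:Decom-into-relations} then identifies the two summands as the restrictions of $V$, yielding $V_{\K}=\CZ{i}{L_{\K}}$ and $V_{\K^{\perp}}=\CZ{i}{L_{\K^{\perp}}}$.

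With this identity in hand, I would regard $L_{\K}$ as a relation, and $V_{\K}$ as an operator, in the separable Hilbert space $(\K,\ip{\cdot}{\cdot})$, and apply Proposition~\ref{caydac} \emph{inside} $\K$ (again at $\zeta=i$): the relation $L_{\K}$ is selfadjoint on $\K$ if and only if $V_{\K}=\CZ{i}{L_{\K}}$ is unitary on $\K$. Combining this with the bijection of reducing subspaces from Theorem~\ref{redsubincal}, $L$ possesses a nonzero reducing subspace $\K$ on which $L_{\K}$ is selfadjoint if and only if $V$ possesses a nonzero reducing subspace $\K$ on which $V_{\K}$ is unitary; that is, $L$ is c.n.s.\ if and only if $V$ is c.n.u., which is the assertion.

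The main obstacle is not computational but bookkeeping: one must be careful that \emph{selfadjoint} and \emph{unitary} are read as intrinsic properties of the subspace $\K$ rather than of the ambient $\H$, so that Proposition~\ref{caydac} can be applied verbatim to the Hilbert space $\K$; the identity $V_{\K}=\CZ{i}{L_{\K}}$ is exactly what makes this legitimate. I would also remark that no direction of the equivalence needs separate treatment, since the involution property~\eqref{cay01} gives $\CZ{i}{V}=L$, rendering the whole argument symmetric in $L$ and $V$.
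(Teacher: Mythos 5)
Your proposal is correct and follows essentially the same route as the paper: Proposition~\ref{caydac} for the dissipative/contraction correspondence, Theorem~\ref{redsubincal} for the matching of reducing subspaces, and the identification $V_{\K}=\CZ{i}{L_{\K}}$ (which the paper uses implicitly when it writes that $\CZ{i}{V_{\K}}\subset L$ is selfadjoint) to transfer selfadjointness of $L_{\K}$ to unitarity of $V_{\K}$ inside $\K$. The only difference is presentational: the paper phrases each direction as a proof by contradiction, while you argue directly and spell out the restriction identity in more detail.
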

\begin{proof}
We first suppose that $L$ is a c.n.s. dissipative relation. By Proposition~\ref{caydac}, one has that $V=\CZ i L$ is a contraction. Besides, if 
there exists a nonzero reducing subspace $\K$ for $V$, such that  $V_{\K}$ is
unitary, then Theorem~\ref{redsubincal} implies that
$\K$ also reduces $L$ and Proposition~\ref{caydac} states that  $\CZ i{V_{\K}}\subset L$ is selfadjoint. This contradicts our assumption that $L$ is c.n.s. Therefore, $V$ is c.n.u. The proof of the converse is handled in the same lines.
\end{proof}
We shall continue with the analogue of the Sz. Nagy-Foia\c{s}-Langer
decomposition for dissipative relations.

\begin{theorem}\label{Desconucnu01} 
If $L$ is a closed dissipative relation, then there exists a unique
  reducing subspace $\K$ for $L$, such that $L_{\K}$ is selfadjoint and $L_{\K^{\perp}}$ is completely nonselfadjoint.
\end{theorem}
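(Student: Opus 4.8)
The natural strategy is to transport the Sz.~Nagy-Foia\c{s}-Langer decomposition for closed contractions (Theorem~\ref{cndfdes}) across the $Z$ transform. The plan is to set $V:=\CZ i L$, which by Proposition~\ref{caydac} is a closed contraction, apply Theorem~\ref{cndfdes} to obtain a unique reducing subspace $\K$ for $V$ with $V_{\K}$ unitary and $V_{\K^{\perp}}$ completely nonunitary, and then pull everything back to $L$ using $\CZ i{V}=L$ (property~\eqref{cay01}).

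First I would invoke Theorem~\ref{redsubincal}: since $\K$ reduces $V=\CZ i L$, it also reduces $L$. Next I would identify the pieces. Because $\K$ reduces $V$, we have $V=\oP{V_{\K}}{V_{\K^{\perp}}}$, and applying $\CZ i{\cdot}$ together with property~\eqref{cay07} of the $Z$ transform gives $L=\CZ i V=\oP{\CZ i{V_{\K}}}{\CZ i{V_{\K^{\perp}}}}$. Since $\K$ reduces $L$, uniqueness of the decomposition into $L_{\K}\oplus L_{\K^{\perp}}$ (Remark~\ref{re:Decom-into-relations}) forces $L_{\K}=\CZ i{V_{\K}}$ and $L_{\K^{\perp}}=\CZ i{V_{\K^{\perp}}}$. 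Now $V_{\K}$ is unitary, so by Proposition~\ref{caydac} its $Z$ transform $L_{\K}=\CZ i{V_{\K}}$ is selfadjoint; and $V_{\K^{\perp}}$ is completely nonunitary, so by Proposition~\ref{cnucna} the relation $L_{\K^{\perp}}=\CZ i{V_{\K^{\perp}}}$ is completely nonselfadjoint. This establishes existence.

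For uniqueness, I would run the correspondence in reverse. Suppose $\K'$ is another reducing subspace for $L$ with $L_{\K'}$ selfadjoint and $L_{\K'^{\perp}}$ c.n.s. By Theorem~\ref{redsubincal}, $\K'$ reduces $V=\CZ i L$, and by the same argument as above $V_{\K'}=\CZ i{L_{\K'}}$ is unitary (Proposition~\ref{caydac}) while $V_{\K'^{\perp}}=\CZ i{L_{\K'^{\perp}}}$ is completely nonunitary (Proposition~\ref{cnucna}). Then the uniqueness clause of Theorem~\ref{cndfdes} yields $\K'=\K$.

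The main subtlety to watch is the bookkeeping that turns the decomposition of $V$ into the decomposition of $L$ as $\oP{L_{\K}}{L_{\K^{\perp}}}$ with the correct identification of the summands; this rests on the fact that $\K$ reduces both $V$ and $L$ together with property~\eqref{cay07}, which requires the argument $\pm i\in\C\backslash\R$, exactly the case at hand. Everything else is a direct chaining of Proposition~\ref{caydac}, Proposition~\ref{cnucna}, and Theorem~\ref{redsubincal}, so I do not expect a genuine obstacle beyond ensuring the equivalences are applied to the matching pieces $V_{\K}$ and $V_{\K^{\perp}}$ rather than to $V$ itself.
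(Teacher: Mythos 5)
Your proposal is correct and follows essentially the same route as the paper's own proof: transport via the $Z$ transform, apply Theorem~\ref{cndfdes} to $\CZ i L$, pull back with Theorem~\ref{redsubincal}, identify the summands via Remark~\ref{re:Decom-into-relations}, and conclude with Propositions~\ref{caydac} and~\ref{cnucna}, with uniqueness run in reverse. No substantive differences.
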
 
\begin{proof}
Inasmuch as $L$ is a closed dissipative relation, then by Proposition~\ref{caydac}, one has that $\CZ i L$ is a
  closed contraction. Thus, in according to  Theorem~\ref{cndfdes}, there exists a unique reducing 
  subspace $\K$ for $\CZ i L$, in which $\CZ i L_{\K}$ is
  unitary and $\CZ i L_{\K^{\perp}}$ is c.n.u. In this fashion, Theorem~\ref{redsubincal} implies that $\K$ reduces $L$ and besides, 
\begin{align}\label{eq:properties-Z-Nagy}
\begin{split}
L&=\CZ i {\CZ i L}\\
&=\CZ i {\oP{\CZ i L_{\K}}{\CZ i L_{\K^{\perp}}}}\\
&=\oP{\CZ i {\CZ i L_{\K}}}{\CZ i {\CZ i L_{\K^{\perp}}}}\,.
\end{split}
\end{align}
Concluding, from Remark~\ref{re:Decom-into-relations} and Proposition~\ref{caydac}, one has  that  $L_{\K}=\CZ i
{\CZ i L_{\K}}$ is selfadjoint  and  $L_{\K^{\perp}}=\CZ i {\CZ i L_{\K^{\perp}}}$  is c.n.s., by virtue of  Proposition~\ref{cnucna}.

To prove the uniqueness. If $\K'$ holds the same properties of $\K$, for $L$. Then, Theorem~\ref{redsubincal} asserts that $\K'$ also reduces
\begin{align*}
\CZ i L&=\CZ i {\oP{L_{\K'}}L_{\K'^{\perp}}}\\
&=\oP{\CZ i{L_{\K'}}}{\CZ i{L_{\K'^{\perp}}}}\,.
\end{align*}
Moreover, again by Remark~\ref{re:Decom-into-relations} and Propositions~\ref{caydac},~\ref{cnucna}, one obtains that $\K'$ satisfies the same properties of $\K$, for $\CZ i L$. Hence  $\K'=\K$, since $\K$ is unique for $\CZ i L$.  
\end{proof}
As consequence of the last result and Remark~\ref{re:c.n.s-operator}, the multivalued part of a closed dissipative relation only can inside in its selfadjoint part.  

To present the other decomposition, we shall work in the class of symmetric relations. We follow  \cite{riossilva-expos} in assuming that the core spectrum of a symmetric relation $A$, satisfies 
$\hat\sigma(A)\subset\R$. Furthermore, if $A$ is maximal then $\sigma(A)\subset \C_+\cup\R$.  
\begin{remark}\label{rem:core-spectra-nsa}
Taking into account \eqref{eq:kerspec}, if $A$ is a completely nonselfadjoint, symmetric relation, then  $\sigma_c(A)=\hat \sigma(A)$. Indeed, the linear envelope of every eigenvector of $A$ is a reducing subspace for $A$, in which $A$ is selfadjoint.
\end{remark} 

\begin{definition} A symmetric relation $A$ is called \emph{elementary maximal}, if its transform $\CZ i A$ is a unilateral shift (\cf \cite[Sec.\,82]{MR1255973}).
\end{definition}

\begin{remark}\label{re:pointspectra-c.n.s.}
An elementary maximal relation $A$ is actually maximal, since the unilateral shifts are maximal. This involves that    $\dim \Nk {\cc\zeta}{A^*}=0$, for all $\zeta\in\C_-$ (\cf \cite{riossilva-expos}). Thus, the first von Neumann formula for relations (see for instance \cite[Thm.\,6.1]{MR0361889}) implies that 
\begin{gather}\label{eq:adjoint-elementary}
A^*=A\dotplus\Nk{\zeta}{A^*}\,,\quad\zeta\in\C_-\,,
\end{gather}
where for $\zeta=-i$, the direct sum turns to be orthogonal.
\end{remark} 
\begin{lemma}\label{cnaelmax}
A maximal symmetric relation is  elementary maximal if and only if it is completely nonselfadjoint. 
\end{lemma}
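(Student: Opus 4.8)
The plan is to transport both notions through the $Z$ transform at $\zeta=i$, where the isometry-theoretic facts are already established, and then to invoke the von Neumann-Wold result packaged in Corollary~\ref{coro:cnuunishift}. Write $V:=\CZ i A$. First I would record that a symmetric relation is in particular dissipative: for $\vE fg\in A\subset A^*$ the defining condition of the adjoint, applied to the pair itself, forces $\ip gf=\ip fg$, and together with conjugate symmetry this makes $\ip fg$ real, so $\im\ip fg=0\geq0$. Hence Proposition~\ref{caydac} applies to $A$, and since $A$ is maximal symmetric it yields that $V$ is a maximal isometry, that is, an isometry belonging to $\cA B(\H)$.

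With $V$ an isometry on the whole space, the two conditions in the statement become transparent. By definition, $A$ is elementary maximal exactly when $V=\CZ i A$ is a unilateral shift. On the other hand, Proposition~\ref{cnucna}, applied with $L=A$, gives that $A$ is completely nonselfadjoint exactly when $V$ is completely nonunitary. The bridge between these two conditions is precisely Corollary~\ref{coro:cnuunishift}, which states that an isometry in $\cA B(\H)$ is a unilateral shift if and only if it is completely nonunitary. Chaining the three equivalences,
\[
A\text{ elementary maximal}\iff V\text{ unilateral shift}\iff V\text{ c.n.u.}\iff A\text{ c.n.s.},
\]
establishes the lemma.

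The one step that genuinely uses a hypothesis, rather than merely unwinding definitions, is the passage to $\cA B(\H)$. Corollary~\ref{coro:cnuunishift} is stated only for isometries defined on the entire space, so the maximality of $A$ is not cosmetic: it is exactly what upgrades $\CZ i A$ from a possibly partially defined isometry to one in $\cA B(\H)$, and thereby licenses the use of the von Neumann-Wold decomposition. Everything else is a formal transport through the correspondence of Proposition~\ref{caydac} together with Theorem~\ref{redsubincal}, which guarantees that the reducing subspaces witnessing the relevant notions match on both sides of the transform.
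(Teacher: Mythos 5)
Your proof is correct and follows exactly the route the paper takes: its own proof is the one-line observation that the lemma ``follows straightforward from Corollary~\ref{coro:cnuunishift} and Propositions~\ref{caydac},~\ref{cnucna}.'' You have simply spelled out the chain of equivalences, and your remark that maximality is what places $\CZ i A$ in $\cA B(\H)$ so that Corollary~\ref{coro:cnuunishift} applies is the right identification of where the hypothesis is used.
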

\begin{proof}
It follows straightforward from  Corollary~\ref{coro:cnuunishift} and Propositions~\ref{caydac},~\ref{cnucna}.
\end{proof}

The following assertion uses the fact that a maximal dissipative relation $L$ satisfies $\cc{\dom L}=(\mul L)^\perp$ (\cf \cite[Lem.\,2.1]{MR3057107}).

\begin{theorem}\label{teo:elem-maximal}
If  $A$ is an elementary maximal relation, then $A$ is an unbounded densely defined operator, with the following  spectral properties:
\bea\label{eq:spectral-properties-ME}
\begin{split}
\sigma_p(A)=\emptyset\,,\quad \sigma_c(A)=\R\,,\quad \sigma_r(A)=\C_+\,,\\
\sigma_p(A^*)=\C_-\,,\quad \sigma_c(A^*)=\R\,,\quad \sigma_r(A^*)=\emptyset\,.
\end{split}
\eea
\end{theorem}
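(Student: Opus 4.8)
The plan is to transfer all spectral information to the unilateral shift $V=\CZ i A$ and then read it off from the deficiency-index structure of $A$. First I would record the structural reductions. Since $A$ is elementary maximal it is maximal symmetric and, by Lemma~\ref{cnaelmax}, completely nonselfadjoint; because a unilateral shift is a closed isometry, Proposition~\ref{caydac} makes $A$ a closed dissipative relation, so Remark~\ref{re:c.n.s-operator} gives $\mul A=\{0\}$ and $A$ is an operator. Maximality together with the quoted identity $\cc{\dom A}=(\mul A)^{\perp}=\H$ shows $A$ is densely defined. Unboundedness I would get by contradiction: a bounded, closed, densely defined operator is everywhere defined, and a bounded everywhere-defined symmetric operator on $\H\neq\{0\}$ is selfadjoint on all of $\H$, contradicting complete nonselfadjointness (the space is nonzero because a genuine shift has nonzero wandering space).

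Next I would pin down the deficiency indices via \eqref{CtoT} with $\zeta=i$. The wandering space of $V$ is $\cA L=\oM{\H}{\ran V}=\ker(A^*+iI)$, which is nonzero, while Remark~\ref{re:pointspectra-c.n.s.} gives $\dim\Nk{\cc\zeta}{A^*}=0$ for every $\zeta\in\C_-$; since $\C_+\subset\hat\rho(A)$ is connected, \eqref{eq:constant-qregular} keeps the dimension constant and $\geq1$ throughout $\C_+$. As $A$ is now a symmetric operator, $A-\zeta I$ is bounded below for non-real $\zeta$, hence injective with closed range. For $\zeta\in\C_-$ the vanishing deficiency makes the range all of $\H$, so $\C_-\subset\rho(A)$; for $\zeta\in\C_+$ the nonzero deficiency makes the range a proper closed subspace, so $\C_+\subset\sigma_r(A)$. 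Complete nonselfadjointness removes the point spectrum: by Remark~\ref{rem:core-spectra-nsa} there are no eigenvectors, whence $\sigma_p(A)=\emptyset$ and $\sigma_c(A)=\hat\sigma(A)$.

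The key structural step is $\R\subset\hat\sigma(A)$, and this I expect to be the main obstacle since it rests on the jump of the deficiency index across the real axis rather than on any explicit computation. I would argue that a real point $\lambda_0\in\hat\rho(A)$ would possess an open neighborhood inside $\hat\rho(A)$ meeting both $\C_+$ and $\C_-$; on this connected set the integer $\dim\Nk{\cc\zeta}{A^*}$ is constant by \eqref{eq:constant-qregular}, yet it equals $0$ on the $\C_-$ side and $\geq1$ on the $\C_+$ side, a contradiction. Hence no real point is quasi-regular and, using $\hat\sigma(A)\subset\R$, we get $\hat\sigma(A)=\R$, so $\sigma_c(A)=\R$; together with $\sigma(A)\subset\C_+\cup\R$ and $\C_+\subset\sigma_r(A)$ this gives $\sigma(A)=\C_+\cup\R$ and $\sigma_r(A)=\sigma(A)\setminus\hat\sigma(A)=\C_+$.

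For $A^*$ I would combine conjugation symmetry with the residual/point interchange. Remark~\ref{re:complex-conjugate} yields $\sigma(A^*)=\cc{\sigma(A)}=\C_-\cup\R$ and $\sigma_c(A^*)=\R$, while Proposition~\ref{prop:conjugate-residual} turns $\sigma_r(A)=\C_+$ into $\C_-\subset\sigma_p(A^*)$. It remains to exclude real eigenvalues of $A^*$, the one genuine calculation: writing $A^*=A\dotplus\Nk{-i}{A^*}$ from \eqref{eq:adjoint-elementary}, I would decompose a hypothetical pair $(h,\mu h)\in A^*$ with $\mu\in\R$ as $h=f+\phi$ and $g=\mu f+(\mu+i)\phi$, where $(f,g)\in A$ and $\phi\in\cA L=\ker(A^*+iI)$. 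The adjoint pairing $\ip{-i\phi}{f}=\ip{\phi}{g}$ gives $\ip{\phi}{g}=i\ip{\phi}{f}$, and substituting $g$ lets one solve for $\ip{f}{\phi}$ and compute $\im\ip{f}{g}=\no\phi^2$; since $A$ is symmetric, $\ip{f}{g}\in\R$, forcing $\phi=0$, so $\mu$ would be an eigenvalue of $A$, impossible as $\sigma_p(A)=\emptyset$. Thus $\sigma_p(A^*)=\C_-$ exactly, and since $\hat\sigma(A^*)=\sigma_p(A^*)\cup\sigma_c(A^*)=\C_-\cup\R=\sigma(A^*)$, we conclude $\sigma_r(A^*)=\emptyset$.
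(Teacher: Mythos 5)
Your argument is correct, and its beginning and end coincide with the paper's: the reduction to an unbounded, densely defined operator via Lemma~\ref{cnaelmax} and Remark~\ref{re:c.n.s-operator}, the use of Remark~\ref{rem:core-spectra-nsa} to kill $\sigma_p(A)$, and the exclusion of eigenvalues of $A^*$ by decomposing along $A^*=A\dotplus\Nk{-i}{A^*}$ and contradicting $\ip fg\in\R$ are all the same ingredients (your computation treats only real $\mu$, which suffices since you already know $\sigma(A^*)\subset\C_-\cup\R$; the paper's single computation shows $\sigma_p(A^*)\subset\C_-$ for arbitrary $\zeta$ at once). Where you genuinely diverge is in the middle. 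You locate $\sigma(A)$ first and directly: bounded-below estimates for a symmetric operator give closed range off $\R$, the deficiency indices $(0$ in $\C_-$, $\geq 1$ in $\C_+)$ then place $\C_-$ in $\rho(A)$ and $\C_+$ in $\sigma_r(A)$, and $\R\subset\hat\sigma(A)$ follows from the constancy of $\dim\Nk{\cc\zeta}{A^*}$ on connected components of $\hat\rho(A)$ (equation~\eqref{eq:constant-qregular}), since a real quasi-regular point would merge the two half-planes into one component carrying incompatible constant values. The paper instead works through $A^*$: it gets $\sigma_r(A^*)=\emptyset$ from Proposition~\ref{prop:conjugate-residual} applied to $\sigma_p(A)=\emptyset$, obtains $\C_-\subset\sigma_p(A^*)\subset\hat\sigma(A^*)$ from \eqref{eq:adjoint-elementary}, and captures $\R$ purely topologically, from the closedness of $\hat\sigma(A^*)$, before translating everything back to $A$ by conjugation. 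Your index-jump argument is the classical von Neumann-style proof and is self-contained at the level of $A$, at the cost of invoking the bounded-below estimate and the component-constancy theorem; the paper's closure argument is shorter and avoids both, but needs the full strength of the $\sigma_p(A^*)\subset\C_-$ computation to finish. Both are sound.
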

\begin{proof}
Since $A$ is maximal, then it is closed. Besides, A is an operator, in according to  Lemma~\ref{cnaelmax} and Remark~\ref{re:c.n.s-operator}. This fact also implies that $\dom A$ is dense but not the whole space, otherwise $A$ has to be  selfadjoint, but this contradicts Lemma \ref{cnaelmax}. Also, in this way, one obtains that $A$ is unbounded.

We now proceed to show \eqref{eq:spectral-properties-ME}. Since $A$ is c.n.s, it is straightforward to see from Remark~\ref{rem:core-spectra-nsa} that $\sigma_p(A)=\emptyset$. Additionally, Proposition~\ref{prop:conjugate-residual} implies $\sigma_r(A^*)\subset \sigma_p(A)=\emptyset$. On the other hand, \eqref{eq:adjoint-elementary} yields $\dim \Nk\zeta {A^*}\neq 0$, for all $\zeta\in\C_-$, which  means $\C_-\subset\sigma_p(A^*)$. To show the other inclusion, if $\zeta\in\sigma_p(A^*)$, then there exists $\vE f{\zeta f}\in A^*$, with $\no f=1$. Again by \eqref{eq:adjoint-elementary}, there is $\vE hk\in A$ and $\vE t{-it}\in A^*$, such that 
\bea\label{eq:elements-of-A*}
\vE f{\zeta f}=\vE hk+\vE t{-i t}\,.
\eea
Observe that $t\neq0$, since $\sigma_p(A)=\emptyset$. Moreover, by virtue of  $A$ is symmetric, one produces $\ip hk\in\R$ and $\ip kt=-i\ip ht$. Thus, taking into account  \eqref{eq:elements-of-A*},
 \begin{align*}
 \im \zeta&=\im \ip f{\zeta f}\\&=\im (\ip hk+2\re\ip tk-i\no t^2)<0\,.
 \end{align*}
 This proves $\sigma_p(A^*)\subset C_-$ and hence they are equals. 
 
 The maximality of $A$ implies that $\sigma(A)\subset\C_+\cup\R$. 
Consequently, by Remark~\ref{re:complex-conjugate} and since 
 $\sigma_r(A^*)=\emptyset$, one has 
 $\hat\sigma(A^*)=\sigma(A^*)\subset\C_-\cup\R$. Then, $\hat\sigma(A^*)=\C_-\cup\R$, since 
 $\hat\sigma(A^*)$ is closed and contains the lower half plane. Hence, by virtue of \eqref{eq:kerspec}, one obtains $\sigma_c(A^*)=\R$. To conclude, again Remark~\ref{re:complex-conjugate} yields $\sigma_c(A)=\R$ and $\sigma(A)=\C_+\cup\R$, which asserts $\sigma_r(A)=\C_+$. 
\end{proof}

The method used in the last proof can be carried over to unilateral shift operators, holding similar properties to \eqref{eq:spectral-properties-ME} in the following sense: If $V$ is a unilateral shift, then:
\begin{gather*}
\sigma_p(V)=\emptyset\,,\quad \sigma_c(V)=\partial\D\,,\quad \sigma_r(V)=\D\,,\\
\sigma_p(V^*)=\D\,,\quad \sigma_c(V^*)=\partial\D\,,\quad \sigma_r(V^*)=\emptyset\,,
\end{gather*}
where $\D$ is the open unit disc and $\partial\D$ its boundary.

We conclude this section by showing the analogue of the von Neumann-Wold
decomposition for symmetric relations. 

\begin{theorem}\label{descsisaem}
If $A$ is a maximal symmetric relation, then 
\begin{gather}\label{eq:reducing-maximal-symm}
\K=\bigoplus_{n=0}^\infty \CZ iA^n\cA L\,, \quad \mbox{ with }\quad \cA L=\dom \Nk {-i}{A^*}\,,
\end{gather}
is the unique reducing subspace for $A$, such that $A_{\K}$ is elementary maximal and $A_{\K^\perp}$ is selfadjoint.
\end{theorem}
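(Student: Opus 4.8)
The plan is to transport everything to the isometry $V:=\CZ i A$ and read off the result from the von Neumann--Wold decomposition (Theorem~\ref{Desconucnu}). Since $A$ is maximal symmetric, Proposition~\ref{caydac} gives that $V=\CZ i A$ is a maximal isometry, i.e.\ an isometric operator in $\cA B(\H)$, so Theorem~\ref{Desconucnu} applies and produces a unique reducing subspace $\K_{0}=\bigcap_{n=0}^{\infty}\ran V^{n}$ for $V$, with $V_{\K_{0}}$ unitary and $V_{\K_{0}^{\perp}}$ a unilateral shift on $\K_{0}^{\perp}=\bigoplus_{n=0}^{\infty}V^{n}\cA L_{0}$, where $\cA L_{0}=\oM{\H}{\ran V}$. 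Because being \emph{elementary maximal} means having a unilateral shift for $Z$ transform, the candidate is $\K:=\K_{0}^{\perp}$ (the unitary part $\K_{0}$ will give the selfadjoint summand), and Theorem~\ref{redsubincal} promotes ``$\K$ reduces $V$'' to ``$\K$ reduces $A$''.

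Next I would check that this $\K$ is literally the subspace in \eqref{eq:reducing-maximal-symm}. Matching powers is automatic since $V^{n}=(\CZ i A)^{n}$, so only the wandering spaces must be reconciled. Using \eqref{CtoT} one has $\ran V=\ran\CZ i A=\ran(A-iI)$, and then, exactly as in the proof of Proposition~\ref{prop:conjugate-residual} via \eqref{poHs},
\[
\cA L=\dom\Nk{-i}{A^*}=\ker(A^*+iI)=[\ran(A-iI)]^{\perp}=\oM{\H}{\ran V}=\cA L_{0}\,,
\]
the range being closed because $V$ is an everywhere-defined isometry. This reproduces the formula for $\K$ verbatim.

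It then remains to identify the two summands and to establish uniqueness, following the template of Theorem~\ref{Desconucnu01}. As $\K$ reduces both $A$ and $V$, property \eqref{cay07} yields $V=\oP{\CZ i{A_{\K}}}{\CZ i{A_{\K^{\perp}}}}$, and since $\CZ i{A_{\K}}\subset\oP{\K}{\K}$ and $\CZ i{A_{\K^{\perp}}}\subset\oP{\K^{\perp}}{\K^{\perp}}$ by Remark~\ref{insub00}, the uniqueness in Remark~\ref{re:Decom-into-relations} forces $\CZ i{A_{\K}}=V_{\K}$ and $\CZ i{A_{\K^{\perp}}}=V_{\K^{\perp}}$. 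Hence $\CZ i{A_{\K}}=V_{\K_{0}^{\perp}}$ is a unilateral shift, so $A_{\K}$ is elementary maximal, while $\CZ i{A_{\K^{\perp}}}=V_{\K_{0}}$ is unitary, so $A_{\K^{\perp}}$ is selfadjoint by Proposition~\ref{caydac}. For uniqueness, any reducing $\K'$ with $A_{\K'}$ elementary maximal and $A_{\K'^{\perp}}$ selfadjoint gives, by Theorem~\ref{redsubincal} and the same computation, a reducing subspace $\K'^{\perp}$ for $V$ with $V_{\K'^{\perp}}$ unitary and $V_{\K'}$ a unilateral shift; uniqueness in Theorem~\ref{Desconucnu} then forces $\K'^{\perp}=\K_{0}$, i.e.\ $\K'=\K$. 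The only genuine obstacle is the bookkeeping of which von Neumann--Wold summand plays which role together with the closed-range identification of $\cA L$; there is no new analytic input beyond the already-established correspondence between the $Z$ transform and reducing subspaces.
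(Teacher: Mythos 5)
Your proof is correct and follows essentially the same route as the paper's: pass to the isometry $\CZ i A$ via Proposition~\ref{caydac}, apply the von Neumann--Wold decomposition (Theorem~\ref{Desconucnu}), transport the reducing subspace back with Theorem~\ref{redsubincal}, and identify the two summands through Remark~\ref{re:Decom-into-relations} and Propositions~\ref{caydac}. Your explicit check that $\cA L=\dom\Nk{-i}{A^*}=\ker(A^*+iI)=\oM{\H}{\ran \CZ i A}$ simply fills in a step the paper compresses into ``apply the properties of the $Z$ transform and \eqref{CtoT} in \eqref{carHhanm}.''
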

\begin{proof}
The fact that $A$ is a maximal symmetric relation, asserts by Proposition~\ref{caydac} that $\CZ i A$ is an isometry in $\cA B(\H)$. Then, by Theorem~\ref{Desconucnu},  there exists a unique reducing 
  subspace $\K$ for $\CZ i A$, such that $\CZ i A_{\K^\perp}$ is unitary
  and $\CZ i A_{\K}$ is a unilateral shift. Furthermore, Theorem~\ref{redsubincal} shows that $\K$
  reduces $A$. We follow the same reasoning of \eqref{eq:properties-Z-Nagy} to obtain
\begin{align*}
A=\oP{\CZ i {\CZ i A_{\K}}}{\CZ i {\CZ i A_{\K^{\perp}}}}\,.
\end{align*}
Therefore, Remark~\ref{re:Decom-into-relations} and Proposition~\ref{caydac} imply that  $A_{\K^{\perp}}=\CZ i {\CZ i A_{\K^{\perp}}}$
is selfadjoint and $A_{\K}=\CZ i {\CZ i A_{\K}}$ is symmetric, which certainly is elementary maximal.  To finish, we apply the properties of $Z$ transform and  \eqref{CtoT} in \eqref{carHhanm} to  obtain  \eqref{eq:reducing-maximal-symm}. Uniqueness is proven following  the same
lines of the proof of Theorem~\ref{Desconucnu01}. 
\end{proof}

\section{Example} \label{sec:Example}

Let us consider the Hilbert space of square-summable sequences $l_2(\N)$, with canonical basis $\{\delta_k\}_{k\in\N}$. We establish by $l_2(\fin)\subset l_2(\N)$, as the set of all sequences with only a finite number of nonzero entries. We define the linear operator  $\tilde A$, whose domain is given by
\begin{gather*}
\dom \tilde A:=\llb\sum_{k\in\N}(f_k-if_{k-1})\delta_k\,:\, \sum_{k\in\N}f_k\delta_k\in l_2(\fin)\rrb\,,
\end{gather*}
such that
\begin{gather*}
\tilde A\left(\sum_{k\in\N}(f_k-if_{k-1})\delta_k\right)=\sum_{k\in\N}(if_k-f_{k-1})\delta_k\,.
\end{gather*}

Let $A$ denote the closure of $\tilde A$. By a simple computation, one verifies that $S:=\CZ i A$ is the shift operator $S\delta_k=\delta_{k+1}$. Then, $A$ is elementary maximal and in according to Theorem~\ref{teo:elem-maximal}, it is unbounded and densely defined in $l_2(\N)$, with spectral properties \eqref{eq:spectral-properties-ME}. Inasmuch as  $\ran S=\oM{l_2(\N)}{\Span \{\delta_1\}}$, one has by  \eqref{CtoT} and  \eqref{poHs} that    
\begin{align}\label{eq:indices-elementary-A}
\begin{split}
\Span\{\delta_1\}&=\oM{l_2(\N)}{\ran \CZ i A}\\
&=\oM{l_2(\N)}{\ran(A-iI)}=\ker(A^*+iI)\,.
\end{split}
\end{align}
Subsequently, \eqref{eq:adjoint-elementary} yields 
\bea\label{eq:adj-elem-max-Exam}
A^*=\oP{A}{\Span\llb\vE{\delta_1}{-i\delta_1}\rrb}\,.
\eea
For simplicity of notation, we set $\K=\oM{l_2(\N)}{\Span \{\delta_1\}}$ and $
Y=\Span\llb\vE 0{\delta_1}\rrb$.

We now consider \beao B:=A\rE{\K}\,,\eeao which is a closed symmetric operator with $\CZ i B=S\rE{\K}$. Besides, it is straightforward to compute that $B=A\cap Y^*$ and since $Y$ is unidimensional,\begin{align}\label{eq:adjoint-auxB}
\begin{split}
B^*&=\left(A\cap Y^*\right)^*\\
&=-\left(\left(A\cap Y^*\right)^\perp\right)^{-1}\\
&=-\left(\cc{A^\perp\dotplus (Y^*)^\perp}\right)^{-1}\\
&=\cc{(-A^{-1})^\perp\dotplus(- (Y^*)^{-1})^\perp}\\
&=\cc{A^*\dotplus Y}=A^*\dotplus Y\,.
\end{split}
\end{align}

We observe that $\ran B\subset\K$. In this fashion, we may define the linear relation 
\beao
A_\infty:=\oP{B}{Y}\,,
\eeao
which is a closed symmetric extension of $B$, whit multivalued part $Y$. In view that the maximality of $A$ is equivalent to say $\dim \Nk {\cc\zeta} {A^*}=0$, $\zeta\in\C_-$ (\cf \cite{riossilva-expos}). Then, in view of   \eqref{eq:adjoint-auxB},
\begin{gather*}
\Nk {\cc\zeta}{A_{\infty}^*}\subset\Nk {\cc\zeta}{B^*}=\Nk {\cc\zeta}{A^*}\,,
\end{gather*}
wherefrom  $A_\infty$ is maximal. Following the same reasoning of \eqref{eq:indices-elementary-A} and \eqref{eq:adj-elem-max-Exam}, for $B$ in $\K$, one produces
\bea\label{eq:adj-B-decomposed}
B^*=\oP{B}{\Span\llb\vE{\delta_2}{-i\delta_2}\rrb}\,.
\eea 
One notes at once that $\K$ reduces $A_\infty$. Consequently,  Theorem~\ref{teo:invariante-adjunto}, \eqref{eq:reducing-distribution} and \eqref{eq:adj-B-decomposed} yield 
\begin{align}\label{eq:adjoint-A-infty}
(A_\infty)^*=\oP{\oP{B}{\Span\llb\vE{\delta_2}{-i\delta_2}\rrb}}{Y}\,.
\end{align}
Thus, $\dom\Nk{-i}{(A_\infty)^*}=\Span\{\delta_2\}$ and 
\begin{align*}
\bigoplus_{n=0}^\infty\CZ i{A_\infty}^n (\Span\{\delta_2\})&=
\bigoplus_{n=0}^\infty\CZ i{B}^n (\Span\{\delta_2\})\\&=
\bigoplus_{n=0}^\infty S^n(\Span\{\delta_2\})=\K\,.
\end{align*}
Therefore, by virtue of Theorem~\ref{descsisaem}, one concludes that $\K$ is the unique reducing subspace for $A_\infty$ in which $B$ is elementary maximal and $Y$ is selfadjoint in  $\Span\{\delta_1\}$. 

%\bibliography{research_bibliography,luis_o_silva,not_published}
%\bibliographystyle{amsplain}
%\bibliographystyle{abbrv}

\def\cprime{$'$} \def\lfhook#1{\setbox0=\hbox{#1}{\ooalign{\hidewidth
  \lower1.5ex\hbox{'}\hidewidth\crcr\unhbox0}}} \def\cprime{$'$}
  \def\cprime{$'$} \def\cprime{$'$} \def\cprime{$'$} \def\cprime{$'$}
  \def\cprime{$'$} \def\cprime{$'$}
\providecommand{\bysame}{\leavevmode\hbox to3em{\hrulefill}\thinspace}
\providecommand{\MR}{\relax\ifhmode\unskip\space\fi MR }
% \MRhref is called by the amsart/book/proc definition of \MR.
\providecommand{\MRhref}[2]{%
  \href{http://www.ams.org/mathscinet-getitem?mr=#1}{#2}
}
\providecommand{\href}[2]{#2}

\end{document}